\pgfplotsset{compat=1.3}		
\newlength\figureheight                 
\newlength\figurewidth                  
  \pgfplotsset{
    compat=newest,
    tick label style={font=\scriptsize},
    label style={font=\scriptsize},
    legend style={font=\scriptsize}
  }
     \renewcommand{\tikzsetnextfilename}[1]{}
\newcommand{\II}{\underline{I}}
\newcommand{\RR}{\mathbb{R}}
\newcommand{\CC}{\mathbb{C}}
\newcommand{\NN}{\mathbb{N}}
\newtheorem{example}[theorem]{Example}
\newtheorem{remark}[theorem]{Remark}
\begin{document}
\title{Broyden's method for nonlinear eigenproblems}
\author{
Elias Jarlebring
%
}

\selectlanguage{english}
\maketitle
\begin{abstract}
  Broyden's method is a general method commonly used for nonlinear
  systems of equations, when very little information is available
  about the problem. We develop an approach based on Broyden's method
  for nonlinear
  eigenvalue problems. Our approach is designed for problems
  where the evaluation of a matrix vector product
  is computationally expensive, essentially
  as expensive as solving the corresponding linear system of equations.
  We show how the structure of the Jacobian matrix can be incorporated into
  the algorithm to improve convergence. The algorithm exhibits local
  superlinear convergence for simple eigenvalues, and we characterize
  the convergence. 
  We show how deflation can be integrated and combined such
  that the method can be used to compute several eigenvalues.
  A specific problem in machine tool milling, coupled with a PDE
  is used to illustrate the approach. The simulations are done in the
  julia programming language, and are provided
  as publicly  available module for reproducability.
\end{abstract}
\section{Introduction}\label{sect:intro}
We here consider the nonlinear eigenvalue problem (NEP) defined by
\begin{equation}  \label{eq:NEP}
M(\lambda)v=0
\end{equation}
where $M:\CC\rightarrow\CC^{n\times n}$ is an analytic function of $\lambda$.
This problem can equivalently be written as a system of
nonlinear equations
\begin{equation}\label{eq:F0}
  F\left(\begin{bmatrix}
    v\\
    \lambda
  \end{bmatrix}\right)=0
\end{equation}
where
\begin{equation}\label{eq:Fdef}
  F\left(\begin{bmatrix}
    v\\
    \lambda
  \end{bmatrix}\right):=\begin{bmatrix}M(\lambda)v\\c^Hv-1\end{bmatrix}
\end{equation}
under the assumption that $c$ is not orthogonal to the eigenvector.
The normalization condition $c^Hx=1$
is selected such that $F$ is analytic and
therefore complex differentiable, which would not be the
case if we  were to select $\|v\|_2^2=v^Hv=1$ instead.

This class of NEPs has been studied for decades, as
can be seen in summary references \cite{Ruhe:1973:NLEVP,Mehrmann:2004:NLEVP,Voss:2013:NEPCHAPTER}
and the benchmark collection \cite{Betcke:2010:NLEVPCOLL}.
Several standard approaches for NEPs of the type \eqref{eq:NEP}
are based on Newton's method. The Newton approach for NEPs
was  proposed already in 1950 \cite{Unger:1950:NICHTLINEARE},
and later developed further in \cite{Peters:1979:INVERSE,Ruhe:1973:NLEVP}.
The residual inverse iteration \cite{Neumaier:1985:RESINV} is an implicit
Newton method \cite{Jarlebring:2017:QUASINEWTON} and forms the basis
of the nonlinear Arnoldi method \cite{Voss:2004:ARNOLDI}.
More recently, block variants of Newton's method has been developed \cite{Kressner:2009:BLOCKNEWTON}.
There is a summary of many methods \cite{Guettel:2017:NEP} of which 
many are Newton methods or can be interpreted as flavors of Newton's method.
The QR-approach for
banded matrices in 
\cite{Garrett:2016:NQA} is based
on Kublanovskaya's approach \cite{Kublanovskaya:1970:APPROACH}
which is also a Newton method
applied to the $(n,n)$-element of the  R-matrix in the QR-factorization of $M(\lambda)$. Two-sided Newton approaches and Jacobi-Davidson approaches have been studied in \cite{Schreiber:2008:PHD}.
Considerable convergence theory and specialization of the Newton type approaches can be found in the literature, e.g., convergence theory 
\cite{Szyld:2013:LOCAL, Szyld:2015:LOCALI,Szyld:2015:LOCALII}
as well as inexact solves and preconditioning \cite{Szyld:2011:EFFICIENT}.

These Newton-approaches depend on explicit access to the matrix $M(\lambda)$,
in ways which are not available.
Most methods depend on direct access of $M(\lambda)$ and/or that the
NEP can be expressed in an affine form 
\begin{equation}  \label{eq:affine}
   M(\lambda)=M_1f_1(\lambda)+\cdots+M_mf_m(\lambda)
\end{equation}
where $f_1,\ldots,f_m$ are analytic functions and $m\ll n$.
The availability of an affine form typically means that when $m$ is small,
the projected problem $V^TM(\lambda)Wz=0$ can normally be solved in
a computationally cheap way.
The matrix $M(\lambda)$ and an affine form are not always available
in  applications. We illustrate this further in 
Section~\ref{sec:TPDDE} with a problem stemming from the analysis of time-periodic
delay-differential equations.

The approach presented here is based on Broyden's method for nonlinear
systems of equations; see \cite{Broyden:1965:BROYDEN} and more recent
summaries in \cite{Deuflard:2004:NEWTON,Al-Baali:2014:BROYDEN}. Broyden's method is also based on Newton's method, but the Jacobian approximation is updated
(typically with a rank-one matrix)
in order to avoid the computation of the Jacobian matrix. An attractive feature 
of Broyden's method is that only one function evaluation per iteration is required. In the context of NEPs this implies that we do not need an affine form and nor a direct
accurate access to the Jacobian matrix. 

In common for many structured iterative methods, application of a general purpose approach
to a specific problem leads to structures which can be exploited in
the algorithm. We derive in Section~\ref{sec:structbroyden}
a structure of Broyden method iterates when applied to \eqref{eq:F0},
which allows us to improve the approach.
We show how this can be integrated with a
deflation technique (in Section~\ref{sec:deflation}).
In this context we also show how restarting can be carried out in
a natural way.
A local convergence is also characterized (in Section~\ref{sec:convergence}).
We show how the convergence is related to Jordan structure
in the sense of \cite{Gohberg:1982:MATRIXPOLYNOMIALS,Hryniv:1999:PERT}.
More precisely, we show how the convergence is given by the
Jordan chains defined as  the existance of solutions to the equation 
\[
   \sum_{i=0}^a\frac{M^{(i)}(\lambda)}{i!}v_{a-i}=0
   \]
where $v_0$ is a singular vector of $M(\lambda)$.

We present numerical results of simulations for several problems in
Section~\ref{sec:simulations} and Section~\ref{sec:TPDDE} in
order to illustrate the properties of the method and its
competitiveness for the time-perioidic time-delay system.

\section{Background and basic algorithm}
We briefly summarize the
specific version of Broyden's method which
will be the basis of our algorithm on. 
%
We use a damped version of Broyden's method, as described
e.g., in  \cite[Section 7]{Al-Baali:2014:BROYDEN}.
The derivation follows from the Newton-like update
equation 
\begin{equation}  \label{eq:DeltaxkdefJ}   
J_k\Delta x_k=-F(x_k)
\end{equation}
where the next approximation is computed with a damped update equation
\begin{equation} \label{eq:xk_update}
  x_{k+1}=x_k+\gamma_k \Delta x_k  
\end{equation}
The choice of the damping parameter $\gamma_k$ will be tuned to our
setting, essentially to avoid taking too large steps (as
we shall further describe in Remark~\ref{rem:damping}).
  The next matrix $J_{k+1}$ will satisfy (what is commonly called)
  the secant condition
\begin{equation}  \label{eq:secanteq}
J_{k+1}(x_{k+1}-x_k)=F(x_{k+1})-F(x_k)
  \end{equation}
where $J_{k+1}$ is a rank-one modification of $J_{k}$.
We will focus on updates of the form,
\begin{equation}  \label{eq:Jkupdate_structure}
J_{k+1}=J_k+\frac{1}{\|\Delta x_k\|^2}z_{k+1}\Delta x^H.
\end{equation}
By combining \eqref{eq:DeltaxkdefJ},
\eqref{eq:secanteq} and \eqref{eq:Jkupdate_structure},
it is clear that $z_{k+1}$ can be directly computed from 
\begin{equation}  \label{eq:zdef}
  z_{k+1}=\frac{1}{\gamma_k}(F(x_{k+1})-(1-\gamma_k)F(x_k)).
\end{equation}
In the literature on Broyden's method (without damping), e.g.,
the original work \cite{Broyden:1965:BROYDEN}, 
the relation \eqref{eq:Jkupdate_structure}
with choice \eqref{eq:zdef} is typically viewed as the
minimization of the update matrix $J_{k+1}-J_k$  with respect
to the Frobenius norm and maintaining the secant
condition \eqref{eq:secanteq}. 

The equations \eqref{eq:DeltaxkdefJ}, \eqref{eq:xk_update},
\eqref{eq:zdef} and \eqref{eq:Jkupdate_structure}
form an explicit algorithm where the state consists
of a vector $x_k$ and a matrix $J_k$, taking the role
of a Jacobian matrix. This algorithm is called Broyden's good method.
(Our algorithm can be modified to carry out bad Broyden's method.
We focus on the good Broyden method, for simiplicity.)
An unfavorable aspect from a computational perspective
is that the linear system in \eqref{eq:Deltaxkdef} needs to
solved in every step.  There are several ways to avoid this.
Instead storing with the inverse of $J_k$ we can store its inverse
\[
  H_k=J_k^{-1}.
\]
and state the algorithm in terms of $H_k$ instead of $J_k$.
We see immediately that \eqref{eq:DeltaxkdefJ} becomes
\begin{equation}  \label{eq:Deltaxkdef}
\Delta x_k=-H_kF(x_k)  
\end{equation}
Similarly, the update equation \eqref{eq:Jkupdate_structure}
can be reformulated in terms of $H_k$. More precisely,
by applying the Sherman-Morrison-Woodbury formula
\cite[Section~2.1.4]{Golub:2007:MATRIX}, we obtain
\begin{subequations}
\begin{eqnarray}
  H_{k+1}&=&J_{k+1}^{-1}=\left(J_k+\frac{1}{\|\Delta x_k\|^2}z_{k+1}\Delta x_k^H\right)^{-1}\\
  &=&J_k^{-1}- \frac{J_k^{-1}z_{k+1}\Delta x_k^HJ_k^{-1}}{\|\Delta x_k\|^2+\Delta x_k^HJ_k^{-1}z_{k+1}}\\
  &=&H_k- \frac{H_kz_{k+1}\Delta x_k^HH_k}{\|\Delta x_k\|^2+\Delta x_k^HH_kz_{k+1}}.
\end{eqnarray}
\end{subequations}
By using \eqref{eq:zdef} and \eqref{eq:secanteq}
we see that
$H_kz_{k+1}=\frac{1}{\gamma} (H_kF(x_{k+1})+(1-\gamma)\Delta x_k)$
and the following equivalent alternative relation for $H_{k+1}$
\begin{equation} 
  H_{k+1}=H_k-
  \frac{(H_kF(x_{k+1})+(1-\gamma)\Delta x_k)\Delta x_k^HH_k}%
  {\Delta x_k^H(H_kF(x_{k+1})+\Delta x_k)}
\end{equation}
\begin{example}\label{exmp:roundoff}
  In order to illustrate the differences between the two versions of
  Broyden's method in terms of round-off error we carry out
  simulations on a small example (for reproducability). We consider
  the quadratic eigenvalue problem
  \[
 M(\lambda)=A_0+A_1\lambda+A_2\lambda^2
  \]
  where $A_0$, $A_1$ and $A_2$ were randomly generated. 
  We carried out the simulation for both $H$-version
  and $J$-version in single precision,
  as well as a simulation in
  sufficiently high precision such that the result
  iteration can be treated as exact. The residual
  norm history is given in Figure~\ref{fig:roundoff}.
  We see that the $H$-version follows
  the exact error history (computed with high
  precision arithmetic) worse than the
  $J$-version.   The algorithm presented in the next section
  follows the trajectory even better ($T$-variant).
  Although the differences between the methods
  are small in this example, it illustrates what can
  be seen in longer simulations (in Section~\ref{sec:simulations}). 
  \end{example}

\begin{figure}[h]
  \begin{center}
    \subfigure[Iteration history]{\setlength\figureheight{6cm} 
\setlength\figurewidth{7cm}
\begin{tikzpicture}
\begin{axis}[%
width=0.951\figurewidth,
height=\figureheight,
xmin=0,
xmax=50,
xlabel={Iteration $k$},
ymode=log,
ymin=1e-7,
ytick={1e2,1,1e-2,1e-4,1e-6,1e-8,1e-10,1e-12,1e-14,1e-16,1e-18,1e-22,1e-26,1e-30,1e-34},
ymax=100,
ylabel={Residual norm},
grid,
legend pos=south west,
unbounded coords=jump
]
  \addlegendentry{Exact};
  \addplot[    mark=square, only marks, 
    mark options={solid},
    mark size=2.5pt,
    ] table [x index=0, y index=1, col sep=comma]
   {roundoff_conv_data.csv};
 \addlegendentry{$J$};
  \addplot[    mark=+, 
    mark options={solid},
    mark size=2.0pt,
    ] table [x index=0, y index=2, col sep=comma]
   {roundoff_conv_data.csv};
 \addlegendentry{$H$};
  \addplot[    mark=x, 
    mark options={solid},
    mark size=2.0pt, blue
    ] table [x index=0, y index=3, col sep=comma]
   {roundoff_conv_data.csv};
 \addlegendentry{$T$};
  \addplot[    mark=*, red,
    mark options={fill=red},
    mark size=1.0pt,
    ] table [x index=0, y index=4, col sep=comma]
   {roundoff_conv_data.csv};

\end{axis}
\end{tikzpicture}}%
    \subfigure[Zoomed iteration history]{\setlength\figureheight{6cm} 
\setlength\figurewidth{6cm}
\begin{tikzpicture}
\begin{axis}[%
width=0.951\figurewidth,
height=\figureheight,
xmin=40,
xmax=50,
xlabel={Iteration $k$},
ymode=log,
ymin=1e-7,
ytick={1e2,1,1e-1,1e-2,1e-3,1e-4,1e-6,1e-8,1e-10,1e-12,1e-14,1e-16,1e-18,1e-22,1e-26,1e-30,1e-34},
ymax=1e-3,
ylabel={Residual norm},
grid,
legend pos=south west,
unbounded coords=jump
]
  \addlegendentry{Exact};
  \addplot[    mark=square, only marks, 
    mark options={solid},
    mark size=2.5pt,
    ] table [x index=0, y index=1, col sep=comma]
   {roundoff_conv_data.csv};
 \addlegendentry{$J$};
  \addplot[    mark=+, 
    mark options={solid},
    mark size=2.0pt,
    ] table [x index=0, y index=2, col sep=comma]
   {roundoff_conv_data.csv};
 \addlegendentry{$H$};
  \addplot[    mark=x, 
    mark options={solid},
    mark size=2.0pt, blue
    ] table [x index=0, y index=3, col sep=comma]
   {roundoff_conv_data.csv};
 \addlegendentry{$T$};
  \addplot[    mark=*, red,
    mark size=1.0pt,
    mark options={fill=red},
    ] table [x index=0, y index=4, col sep=comma]
   {roundoff_conv_data.csv};

\end{axis}
\end{tikzpicture}}
    \caption{Round-off error illustration.   
      \label{fig:roundoff}
    }
  \end{center}
\end{figure}
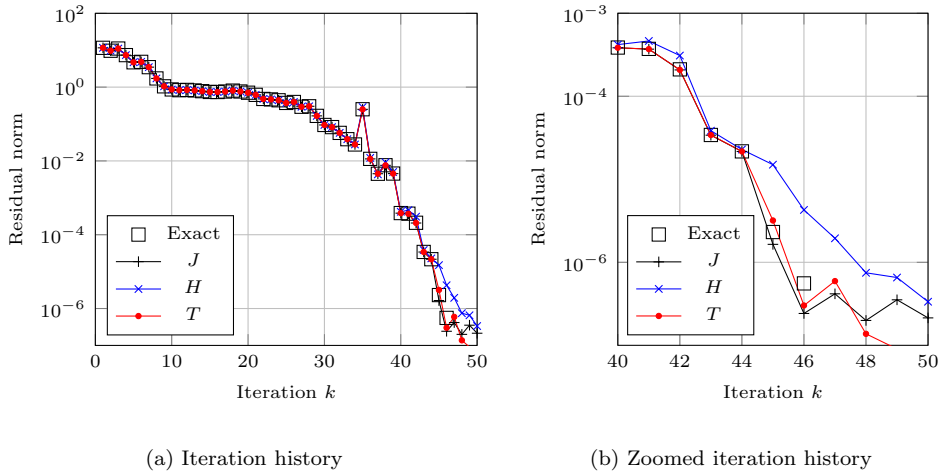

\section{Structure exploiting Broyden method}\label{sec:structbroyden}
\subsection{Structure of the iterates}
We now consider nonlinear systems of equations
with a particular structure:
\begin{equation}  \label{eq:structF}
  F\left(\lambda,
  \begin{bmatrix}
   v\\u
  \end{bmatrix}\right)=
  \begin{bmatrix}
    M(\lambda)&U(\lambda)\\
    C^H& 0
  \end{bmatrix}\begin{bmatrix}
    v\\
    u
  \end{bmatrix}-
  b\in\CC^{n+p+1}
\end{equation}
where $C^H\in\CC^{(p+1)\times n}$, $U(\lambda)\in\CC^{n\times p}$,
$v\in\CC^n$ and $u\in\CC^p$. We will also consistently partition $b$
as $b^T=\begin{bmatrix} b_1^T& b_2^T\end{bmatrix}$.

This structure includes the nonlinear equation formulation
in \eqref{eq:Fdef} as the special case $p=0$ and $b=e_{n+1}$.
We take this more general approach in order to
incorporate deflation in a natural way, as
we will describe in Section~\ref{sec:deflation}.
The Jacobian of this problem can be derived explicitly, 
\begin{equation}  \label{eq:Jstruct}  
  J\left(\lambda,
  \begin{bmatrix}
   v\\u
  \end{bmatrix}\right)
  =\begin{bmatrix}
  M(\lambda)&U(\lambda)&M'(\lambda)v+U'(\lambda)u\\
  C^H&0&0\\
  \end{bmatrix}\in\CC^{(n+p+1)\times (n+p+1)}
\end{equation}
We first note that the structure of the Jacobian
and the iterates are preserved in Broyden's method, when we
denote
\begin{equation}\label{eq:xblocks}
x_k=\begin{bmatrix}
v_k\\
u_k\\
\lambda_k
\end{bmatrix}.
\end{equation}
More precisely, if we initialize the Jacobian in Broyden's
method with the  structure, and label the blocks as
  \begin{equation} \label{eq:J0struct}
    J_1
=
\begin{bmatrix}
  M_1& W_1 \\
  C^H& 0
  \end{bmatrix}
      \in\CC^{(n+p+1)\times (n+p+1)}.
  \end{equation}
  where $W_1=\begin{bmatrix}U_1&f_1\end{bmatrix}$,
  then this structure is preserved in the sense of the
  following theorem.
\begin{theorem}[Structured iterates of Broyden's method]\label{thm:structure}
  Let $(v_1,u_1,\lambda_1)$ be such that, 
  \begin{equation}  \label{eq:CTc0}
  C^Hv_1=b_2
  \end{equation}
  and  $J_1$ be set to \eqref{eq:J0struct}.
  Suppose Broyden's method initiated with   $(v_1,u_1,\lambda_1)$
  and $J_1$ applied to \eqref{eq:structF} does not break down, and let $(v_k,u_k,\lambda_k)$ and $J_k$, $k=2,\ldots$ be
  the iterates.
  Then, the structures \eqref{eq:CTc0} and \eqref{eq:J0struct} are preserved for all $k$, i.e.,
  for  $k=2,\ldots,$ we have
  \begin{equation}  \label{eq:Jkstruct}
      J_k=
\begin{bmatrix}
  M_k& W_k \\
  C^H& 0
  \end{bmatrix}  
  \end{equation}
and
\begin{equation}  \label{eq:CTck}
  C^Hv_k=b_2.  
\end{equation}
  
\end{theorem}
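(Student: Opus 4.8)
The plan is a straightforward induction on $k$, built on a single structural observation: the last $p+1$ rows of the exact Jacobian \eqref{eq:Jstruct} are the \emph{constant} block $\begin{bmatrix} C^H & 0 & 0\end{bmatrix}$, independent of the evaluation point, while the Broyden update \eqref{eq:Jkupdate_structure} modifies the current Jacobian approximation only by a rank-one term. The base case $k=1$ is exactly the hypothesis, \eqref{eq:CTc0} together with \eqref{eq:J0struct}. For the inductive step I assume \eqref{eq:Jkstruct} and \eqref{eq:CTck} at index $k$; since the iteration does not break down, $J_k$ is invertible (so $H_k=J_k^{-1}$ and $\Delta x_k$ in \eqref{eq:DeltaxkdefJ}--\eqref{eq:Deltaxkdef} are well defined), $\Delta x_k\neq 0$, and $\gamma_k\neq 0$, so that $z_{k+1}$ in \eqref{eq:zdef} is well defined.

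First I would propagate the normalization \eqref{eq:CTck}. Partition $\Delta x_k=\begin{bmatrix}\Delta v_k^T & \Delta u_k^T & \Delta\lambda_k\end{bmatrix}^T$ conformally with \eqref{eq:xblocks}. By the inductive hypothesis $C^Hv_k=b_2$, the last $p+1$ entries of $F(x_k)$ — which by \eqref{eq:structF} equal $C^Hv_k-b_2$ — vanish. Reading off the last $p+1$ rows of the Newton-like equation \eqref{eq:DeltaxkdefJ} and using the block form \eqref{eq:Jkstruct} of $J_k$ (whose last $p+1$ rows are $\begin{bmatrix}C^H & 0\end{bmatrix}$) then gives $C^H\Delta v_k=0$. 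Hence, by the damped update \eqref{eq:xk_update}, $C^Hv_{k+1}=C^Hv_k+\gamma_kC^H\Delta v_k=b_2$, which is \eqref{eq:CTck} at index $k+1$.

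Next I would propagate the Jacobian structure. Since $C^Hv_k=b_2$ and $C^Hv_{k+1}=b_2$, the last $p+1$ entries of both $F(x_k)$ and $F(x_{k+1})$ vanish, and therefore so do the last $p+1$ entries of $z_{k+1}$ by its definition \eqref{eq:zdef}. Consequently the rank-one correction $\frac{1}{\|\Delta x_k\|^2}z_{k+1}\Delta x_k^H$ in \eqref{eq:Jkupdate_structure} has zero last $p+1$ rows, so $J_{k+1}$ retains unchanged the last $p+1$ rows $\begin{bmatrix}C^H & 0\end{bmatrix}$ of $J_k$; labelling the (updated) top $n$ rows of $J_{k+1}$ as $\begin{bmatrix}M_{k+1} & W_{k+1}\end{bmatrix}$ yields \eqref{eq:Jkstruct} at index $k+1$. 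This closes the induction.

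I do not expect a genuine obstacle here: the argument is essentially bookkeeping with conformal block partitions, and the only place that needs care is invoking the non-breakdown assumption so that $H_k$, $\Delta x_k$ and $z_{k+1}$ (in particular $\gamma_k\neq 0$) are well defined at every step. The same reasoning could be carried out on the $H$-version of the recursion, but it is cleanest to argue directly with the $J$-form \eqref{eq:Jkupdate_structure} of the update.
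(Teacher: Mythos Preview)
Your proposal is correct and follows essentially the same inductive argument as the paper: use \eqref{eq:Jkstruct} and \eqref{eq:CTck} to show $C^H\Delta v_k=0$, deduce \eqref{eq:CTck} at $k+1$, then observe that the last $p+1$ entries of $z_{k+1}$ vanish so the rank-one update leaves the bottom block row of $J_k$ unchanged. Your treatment of the non-breakdown assumption (ensuring $J_k$ invertible, $\Delta x_k\neq 0$, $\gamma_k\neq 0$) is slightly more explicit than the paper's, but otherwise the proofs coincide.
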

\begin{proof}
  The proof is by induction.
  We suppose \eqref{eq:Jkstruct} and \eqref{eq:CTck} for a specifik $k$
  and prove these two equations for $k+1$.
  It is clear  from \eqref{eq:DeltaxkdefJ} that  $\Delta x_k$ satisfies
  \[
  \begin{bmatrix}
    M_k&W_k\\
    C^H& 0 
  \end{bmatrix}
  \Delta x_k
  =\begin{bmatrix}
    M(\lambda_k)v_k+U(\lambda_k)u_k-b_1\\
  C^Hv_k-b_2
  \end{bmatrix}=\begin{bmatrix}
    M(\lambda_k)v_k+U(\lambda_k)u_k-b_1\\
0
  \end{bmatrix}
  \] 
  such that $C^H\Delta v_k=0$. 
  Since $v_{k+1}=v_k+\gamma \Delta v_k$,  we have
  \[   
  C^Hv_{k+1}=C^H(v_k+\gamma \Delta v_k)=C^Hv_k=b_2.
  \]
  which shows \eqref{eq:CTck} for $k+1$. Therefore, the vector $z_{k+1}$ has
  the structure
  \begin{multline*}
  z_{k+1}=\frac{1}{\gamma}\left(
  \begin{bmatrix}
    M(\lambda_{k+1})v_{k+1}+U(\lambda_{k+1})-b_1\\
    C^Hv_{k+1}-b_2
  \end{bmatrix}-
(1-\gamma_k)
  \begin{bmatrix}
    M(\lambda_{k})v_{k}+U(\lambda_{k})-b_1\\
    C^Hv_{k}-b_2
  \end{bmatrix}
  \right)=\\
\frac{1}{\gamma}
  \begin{bmatrix}
    M(\lambda_{k+1})v_{k+1}+U(\lambda_{k+1})-b_1
-(1-\gamma_k)(M(\lambda_{k})v_{k}+U(\lambda_{k})-b_1)
    \\
 0
  \end{bmatrix}
  \end{multline*}
  The matrix $J_k$ is updated according to \eqref{eq:Jkupdate_structure}.
  The last block row of the update in \eqref{eq:Jkupdate_structure} is
  zero, since the last block row of is $z_{k+1}=0$. Therefore,
  we can define some $J_{k+1}$ and $W_{k+1}$ such that  \eqref{eq:Jkstruct}
  is satisfied for $k+1$.
      \end{proof}
%


\subsection{Structured Broyden}

With the objective to improve Broyden's method for
nonlinear systems of equations of the form \eqref{eq:structF}, we now show how
the structure proven in Theorem~\ref{thm:structure}
can be implicitly preserved.
The $J$-version is straightforward to
modify to incorporate the structure,
by consideration of the blocks of \eqref{eq:DeltaxkdefJ}
in $J_k\Delta x=-F(x_k)$
as follows. We multiply the first block row of 
equation \eqref{eq:DeltaxkdefJ} from the left with
$C^HM_k^{-1}$, i.e.,
\begin{equation}  \label{eq:CMfirst}  
  C^HM_k^{-1}\left(M_k\Delta v_k+W_k\begin{bmatrix}
  \Delta u_k\\
  \Delta \lambda_k
\end{bmatrix}\right)=-  C^HM_k^{-1}r_k,
\end{equation}
  where the residual $r_k$ is defined as 
\begin{equation}  \label{eq:rkdef}
r_k=M(\lambda_k)v_k+U(\lambda_k)u_k-b_1.
\end{equation}
By using that $C^H\Delta v_k=C^H(v_{k+1}-v_k)/\gamma_k=(b_1-b_1)/\gamma_k=0$
due Theorem~\ref{thm:structure},
we conclude from \eqref{eq:CMfirst} that
the following linear system
for $\Delta u_k$ and $\Delta \lambda_k$
is satisfied
\begin{equation}\label{eq:Deltalambdau_JT}
-C^HM_k^{-1}r_k=(C^HM_k^{-1} W_k)\begin{bmatrix}
  \Delta u_k\\
  \Delta \lambda_k
\end{bmatrix}.
\end{equation}
Subsequently, $\Delta v_k$ is found from the first block
row of \eqref{eq:DeltaxkdefJ}, i.e.,
\begin{equation}\label{eq:Delta_vk_JT}
\Delta v_k=-M_k^{-1}\left(W_k
\begin{bmatrix}
  \Delta u_k\\
  \Delta \lambda_k
\end{bmatrix}+r_k\right)
\end{equation}
Hence, the solution of the
linear system in \eqref{eq:DeltaxkdefJ} can be
replaced by first solving \eqref{eq:Deltalambdau_JT}
and then computing \eqref{eq:Delta_vk_JT}. This procedure can be implemented
with $p+2$ linear solves.

At first sight, nothing is gained since we need even more linear solves
than the $J$-version. However, similar to the $H$-version, we can now formulate the algorithm
by representing an inverse. More precisely, instead of storing $M_k$ we store,
\[
  T_k=M_k^{-1}.
\]
The reasoning with exploitation of the Jacobian in the $J$-version
can be translated as follows. 
Equation \eqref{eq:Deltalambdau_JT} can be replaced
by computing 
\begin{equation} \label{eq:Zkdef}
Z_k=T_kW_k  
\end{equation}
which allows us to compute the corresponding linear system in $p+1$ unknowns: 
  \begin{equation}  \label{eq:Deltalinsys}    
\begin{bmatrix}
  \Delta u_k\\
  \Delta \lambda_k
\end{bmatrix}=
-(C^HZ_k)^{-1}(C^HT_kr_k),
  \end{equation}
from which we can form
\begin{equation}  \label{eq:Deltavk}
\Delta v_k=-Z_k
\begin{bmatrix}
  \Delta u_k\\
  \Delta \lambda_k
\end{bmatrix}-T_kr_k.
\end{equation}
For notational convenience we now set
$\II^H:=\begin{bmatrix}
I& 0 \end{bmatrix}\in\RR^{n\times (n+p+1)}$.
After updating the iterates
\begin{subequations}\label{eq:update}
\begin{eqnarray}
  v_{k+1}&=&v_k+\gamma\Delta v_k   \\
  u_{k+1}&=&u_k+\gamma\Delta u_k\\
  \lambda_{k+1}&=&\lambda_k+\gamma\Delta \lambda_k
\end{eqnarray}
\end{subequations}
we compute a new residual  corresponding to $r_{k+1}$
using \eqref{eq:rkdef}  and define $\tilde{z}_{k+1}$
as
\begin{equation}  \label{eq:tildezk}
\tilde{z}_{k+1}=\II^Tz_{k+1}=\frac{1}{\gamma_k}(r_{k+1}-(1-\gamma_k)r_k).  
\end{equation}

By again applying the Sherman-Morrison-Woodbury formula, we see that we can directly
update $T_k$
\begin{subequations}
\begin{eqnarray*}
T_{k+1}&=&M_{k+1}^{-1}=\left(M_k+\frac{1}{\|\Delta x_k\|^2} \II^Tz_{k+1}\Delta x_k^H\II\right)^{-1}\\
&=& T_k-
\frac{1}{\|\Delta x_k\|^2+\Delta x_k^H\II T_k \II^Tz_{k+1}}T_k\II^Tz_{k+1}\Delta x_k^H\II T_k
\end{eqnarray*}
\end{subequations}
which can be further simplified to not contain $x$-dependence,
\begin{equation}  \label{eq:Tkplus1}
  T_{k+1}=T_k+
  T_k \tilde{z}_{k+1}a_{k+1}^H
\end{equation}
where $a_{k+1}^H:=- \Delta v_k^HT_k/(\|\Delta v_k\|^2+\|\Delta u_k\|^2+|\Delta\lambda_k|^2+\Delta v_k^HT_k \tilde{z}_{k+1})$. 
We can subsequently update $W_k$ with
  \begin{equation}  \label{eq:W_update}
      W_{k+1}=W_k+\tilde{z}_{k+1}b_{k+1}^H
  \end{equation}
  where $b_{k+1}^H=\begin{bmatrix}
    \Delta u_k^H & (\Delta \lambda_k)^H
  \end{bmatrix}/(\|\Delta v\|^2+\|\Delta u\|^2+|\Delta\lambda|^2)$.
  Finally, as a consequence of the fact that
  $T_k$ and $W_k$ are updated with rank-one matrices,
  we can also compute $Z_{k+1}$ by a rank one update of  $Z_k$ rather than using the
  definition \eqref{eq:Zkdef}. By combining
  \eqref{eq:Tkplus1} and \eqref{eq:W_update} we find that
  \begin{equation} \label{eq:Z_update}
    Z_{k+1}=T_{k+1}W_{k+1}=Z_k+T_k\tilde{z}_k(a_{k+1}^HW_k+(1+a_{k+1}^H\tilde{z}_{k+1})b_{k+1}^H).
\end{equation}

  We now note that the above equations form an algorithm,
  which does not contain explicitly $J_k$, nor $x_k$,
  and implicitly preserves the preserves the Jacobian
  structure in Theorem~\ref{thm:structure}. The algorithm
  is summarized in Algorithm~\ref{alg:structbroyden}.
  For implementation details, such as how to update $T_k$,
  $W_k$ and $Z_k$ by using only two vector operations, we
  refer to the publicly available software, further described in Section~\ref{sec:simulations}.
  As a consequence of the derivation, we have the following equivalence.
  
\begin{theorem}[Equivalence Broyden methods]
  The $J$-version of Broyden's method applied to \eqref{eq:structF},
  i.e., the iteration defined by \eqref{eq:DeltaxkdefJ}, \eqref{eq:xk_update}
  and \eqref{eq:secanteq}
  is equivalent to the structured Broyden's method, i.e.,
  the iteration defined by \eqref{eq:Deltalinsys},
  \eqref{eq:Deltavk}, \eqref{eq:update}, \eqref{eq:Tkplus1}, \eqref{eq:W_update} and \eqref{eq:Z_update}.
  Moreover, the states of the algorithms are related by \eqref{eq:xblocks} and 
  \begin{equation}  \label{eq:JkstructT}
      J_k=
  \begin{bmatrix}
    T_k^{-1}&W_k\\
    C^H&0
  \end{bmatrix}.
  \end{equation}
  \end{theorem}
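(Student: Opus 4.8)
The plan is to prove both assertions simultaneously by induction on $k$, carrying along the invariants that the states $x_k=(v_k,u_k,\lambda_k)$ of the two algorithms coincide, that $T_k=M_k^{-1}$, that $Z_k=T_kW_k$, and that the $J$-version Jacobian has the block form \eqref{eq:JkstructT}. For $k=1$ this is immediate: $J_1$ is set to \eqref{eq:J0struct}, so defining $T_1:=M_1^{-1}$ and, via \eqref{eq:Zkdef}, $Z_1:=T_1W_1$, all invariants hold at the start, while by hypothesis the two iterations are initiated from the same $(v_1,u_1,\lambda_1)$.

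For the inductive step, assume the invariants at step $k$. First I would show the search directions agree. By Theorem~\ref{thm:structure} the last block of $F(x_k)$ is $C^Hv_k-b_2=0$, so with $r_k$ as in \eqref{eq:rkdef} the two block rows of the Newton system \eqref{eq:DeltaxkdefJ} read $M_k\Delta v_k+W_k[\Delta u_k;\Delta\lambda_k]=-r_k$ and $C^H\Delta v_k=0$. Solving the first block row for $\Delta v_k$ and using $T_k=M_k^{-1}$ together with $Z_k=T_kW_k$ reproduces exactly \eqref{eq:Deltavk}, and substituting this expression into the second block row yields the Schur-complement system \eqref{eq:Deltalinsys}. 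Hence $\Delta x_k$ is the same vector in both formulations, the damped update \eqref{eq:xk_update} coincides with \eqref{eq:update}, and in particular $r_{k+1}$ is produced from identical data.

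Next I would transfer the low-rank updates. By \eqref{eq:zdef} and Theorem~\ref{thm:structure} the vector $z_{k+1}$ has vanishing last block, so $z_{k+1}=\II^T\tilde z_{k+1}$ with $\tilde z_{k+1}$ as in \eqref{eq:tildezk}, and $\|\Delta x_k\|^2=\|\Delta v_k\|^2+\|\Delta u_k\|^2+|\Delta\lambda_k|^2$. Writing \eqref{eq:Jkupdate_structure} blockwise, the last block row stays $[C^H\ 0]$, the $(1,1)$ block becomes $M_{k+1}=M_k+\|\Delta x_k\|^{-2}\tilde z_{k+1}\Delta v_k^H$, and the $(1,2)$ block becomes $W_{k+1}=W_k+\|\Delta x_k\|^{-2}\tilde z_{k+1}[\Delta u_k^H\ \Delta\lambda_k^H]$, which is \eqref{eq:W_update}. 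Applying the Sherman--Morrison--Woodbury formula to the rank-one perturbation $M_{k+1}$ of $M_k$ produces \eqref{eq:Tkplus1} for $T_{k+1}:=M_{k+1}^{-1}$, re-establishing $T_{k+1}=M_{k+1}^{-1}$; and expanding $T_{k+1}W_{k+1}$ with \eqref{eq:Tkplus1} and \eqref{eq:W_update} and collecting the rank-one terms gives \eqref{eq:Z_update}, so $Z_{k+1}=T_{k+1}W_{k+1}$. This closes the induction and yields \eqref{eq:JkstructT} for all $k$.

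The bulk of the argument is bookkeeping; the point needing care is the equivalence — not merely consistency — of the linear solves. Concretely, one must check that the reduced system \eqref{eq:Deltalinsys}--\eqref{eq:Deltavk} is solvable precisely when \eqref{eq:DeltaxkdefJ} is, i.e.\ that $C^HZ_k=C^HT_kW_k$ is invertible if and only if $J_k$ is; this follows from the block-$LU$ factorization of the matrix in \eqref{eq:JkstructT} together with the invertibility of $T_k=M_k^{-1}$, so that ``does not break down'' means the same for both formulations. It is the identity $C^H\Delta v_k=0$ supplied by Theorem~\ref{thm:structure} that makes the elimination reversible rather than only necessary, and the same identity that guarantees $z_{k+1}$, hence every $W_k$ and $T_k$ update, stays within the structured class.
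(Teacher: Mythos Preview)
Your proposal is correct and follows essentially the same approach as the paper: the theorem is stated as a direct consequence of the preceding derivation, which, like yours, uses Theorem~\ref{thm:structure} to reduce the Newton system to the Schur-complement equations \eqref{eq:Deltalinsys}--\eqref{eq:Deltavk} and then applies Sherman--Morrison--Woodbury to obtain \eqref{eq:Tkplus1}, \eqref{eq:W_update}, \eqref{eq:Z_update}. Your explicit induction and the remark that $C^HZ_k$ is invertible iff $J_k$ is (via block $LU$) add welcome rigor but do not change the substance.
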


\renewcommand{\algorithmicrequire}{\textbf{Input:}}
\renewcommand{\algorithmicensure}{\textbf{Output:}}
\begin{algorithm}
\begin{algorithmic}[1]
  \REQUIRE Starting values:\\
  Vectors: $v_1\in\CC^n$, $u_1\in\CC^p$, $\lambda_1\in\CC$ approximating solution to \eqref{eq:structF}\\
  Matrices: $C^H\in\CC^{(p+1)\times n}$, $T_1\in\CC^{n\times n}$ and $W_1\in\CC^{n\times (p+1)}$ approximating \eqref{eq:Jstruct} \\
  Input must satisfy $C^Hv_1=b_2$.
\ENSURE $v_m$, $u_m$, $\lambda_m$, $T_m$, $W_m$
\STATE Compute $r_1$ according to \eqref{eq:rkdef}
\STATE Compute $Z_1$ according to \eqref{eq:Zkdef}
\WHILE {$k=1,2,\ldots$ until convergence }
\STATE Compute $\Delta u_k$, $\Delta \lambda_k$ by solving the linear system  \eqref{eq:Deltalinsys} in $p+1$ variables
\STATE Compute $\Delta v_k$ with \eqref{eq:Deltavk}
\STATE Select the damping parameter $\gamma_k$, e.g., as in Remark~\ref{rem:damping}
\STATE Update the iterates by computing $v_{k+1}$, $u_{k+1}$ and $\lambda_{k+1}$ with \eqref{eq:update}
\STATE Compute $r_{k+1}$ according to \eqref{eq:rkdef}
\STATE Compute $\tilde{z}_{k+1}$ using $r_k$ and $r_{k+1}$ and \eqref{eq:tildezk}
\STATE Compute $Z_{k+1}$ with \eqref{eq:Z_update}
\STATE Compute $W_{k+1}$ with \eqref{eq:W_update}
\STATE Compute $T_{k+1}$ with \eqref{eq:Tkplus1}
\ENDWHILE
\end{algorithmic}
\caption{Structured Broyden's method\label{alg:structbroyden}}
\end{algorithm}
\bigskip\bigskip

\begin{remark}[Selection of damping]\label{rem:damping}
  The damping parameter is used to prevent the algorithm
  from taking too big steps in a pre-asymptotic phase, which can otherwise
  lead divergence or convergence to an (undesired) solution far away.
  In practice, we observed that the $\lambda$-approximation
  in the beginning of the iteration often generated new approximations far
  away from the true solution,
  Therefore, we capped the step by selection
  \begin{equation}  \label{eq:gammak_selection}
  \gamma_k=\min(1,t/|\Delta x_k|)    
  \end{equation}
  where $\tau$ is a threshold parameter.
  
  This implies $\|x_{k+1}-x_k\|<t$ and in particular that $|\lambda_{k+1}-\lambda_k|\le t$.
  This choice was determined based on numerical simulations.
  Another option  would be the Armijo-steplength, as
  used, e.g., in the context of Newton's method for NEPs in \cite{Kressner:2009:BLOCKNEWTON}.
  In contrast to \eqref{eq:gammak_selection},
  the standard implementation of Armijo step-steplength involves
  function evaluations, and is not competitive in our situation. 
  We note that there is very little general conclusive theoretical
  analysis concerning how the damping paramater is best chosen in
  a Broyden setting, as e.g., pointed
  out in \cite{Al-Baali:2014:BROYDEN}.
\end{remark}

%
%
%
%
%
%
%
%
%
%
%
%
\section{Deflation}\label{sec:deflation}

\subsection{A deflated NEP}
Structured Broyden's method can be directly applied to \eqref{eq:Fdef}
to compute an eigenpair of \eqref{eq:NEP}, as
was illustrated in Example~\ref{exmp:roundoff}.
  In order to provide the possibility
to compute several eigenvalues in a robust way, we
here develop a deflation technique, which can be integrated
with the structured Broyden's method. Our reasoning
is inspired by the work on invariant
pairs for NEPs in \cite{Kressner:2009:BLOCKNEWTON} and
deflation \cite{Effenberger:2013:PHD}.
These works, in turn, are inspired by ideas for quadratic eigenvalue
problems
\cite{Meerbergen:2001:LOCKING,Meerbergen:2008:QUADARNOLDI,Beyn:2009:CONTINUATION}. 

The essential conclusion of our reasoning provided below is that we can 
define an augmented NEP as
\begin{equation}\label{eq:MNEP}
G(\lambda)=
\begin{bmatrix}
  M(\lambda) & U(\lambda)\\
  X^H & 0 
  \end{bmatrix}
\end{equation}
whose eigenvalues are essentially  the same as the original NEP
except for some eigenvalues which are removed.
(We postpone the definition of $X$ and the function $U(\lambda)$ until
after the discussion of invariant pairs below.)
Note that if we add an orthogonalization constraint
to $G(\lambda)w=0$ as in \eqref{eq:F0} with a particular vector $c^H=[c_1^H\;\;0]$,
we obtain a nonlinear system of with the structure of the previous
section, i.e., \eqref{eq:structF}. Our
construction is based on applying Algorithm~\ref{alg:structbroyden}
to this problem.

For the derivation of this approach we need the
concepts of invariant pairs, orthogonalization conditions
and augmented invariant pairs,
which we briefly summarize. See \cite{Kressner:2009:BLOCKNEWTON},
\cite{Effenberger:2013:PHD} and \cite{Effenberger:2013:ROBUST}
for a detailed characterization.
Without loss of generality, let $M$ in \eqref{eq:NEP} be decomposed
as a sum of products of matrices and functions as in
\eqref{eq:affine}.
This decomposition always exists, although in computation it
does not always lead to efficient algorithms if $m$ is large.
We will only use this decomposition for theoretical purposes
and not in the final algorithm.
An invariant pair of \eqref{eq:NEP} is defined as a
pair $(X,S)\in\CC^{n\times p}\times \CC^{p\times p}$ which satisfies
\[
   0=M_1Xf_1(S)+\cdots+M_mXf_m(S),
\]
where $f_i(S)\in\CC^{m\times m}$ are matrix functions of $f_i$, $i=1,\ldots,m$.
By computing a Schur decomposition of $S$, it is possible
to show that the eigenvalues of $S$ are eigenvalues of \eqref{eq:NEP}.
For standard eigenvalue problems, we usually require that the columns of $X$
(which form a basis of an invariant subspace)
are linearly independent. This is done
in order to prevent the same eigenspace to appear several times
in the invariant pair.
In practice (still linear eigenvalue problems) this is usually achieved
by imposing that
the columns of $X$  are orthonormal.
The concept of minimality formalizes this reasoning.
The minimality concept is slightly different in the nonlinear
case,  
due to the fact that several eigenvalues can have
the same eigenvector  (or correspondingly for invariant subspaces).
The generalization is not expressed terms of the  column span $X$, but instead of the column span of
\begin{equation}  \label{eq:longorth}
  \begin{bmatrix}
  X\\
  \vdots\\
  XS^{\ell-1}
\end{bmatrix}.
\end{equation}
If there exists $\ell\in\NN$ such that \eqref{eq:longorth}
has full column rank, then the pair is called minimal, and the smallest $\ell$
such that \eqref{eq:longorth} has full column rank,
is called the minimality index of the pair $(X,S)$.
As pointed out in \cite{Effenberger:2013:ROBUST}, for minimal invariant
pairs $\ell=1$ is generic.
  
The concept of invariant pairs was used in a natural way to
construct a deflation technique for (simplified) Newton method and a Jacobi-Davidson
method in \cite{Effenberger:2013:ROBUST} and \cite{Effenberger:2013:PHD}.
The main idea is to compute invariant pairs one column at a time.
Given an invariant pair $(X,S)$, vectors $v$, $u$ and $\lambda$
are computed such that the extended pair
  \begin{equation}  \label{eq:extendedpair}
   (\hat{X},\hat{S})=
  \left(\begin{bmatrix}
    X&v
  \end{bmatrix},
\begin{bmatrix}
  S & u\\
  & \lambda
  \end{bmatrix}
  \right)
  \end{equation}
is also an invariant pair.
In \cite[Lemma~6.1.3]{Effenberger:2013:PHD} the
minimality is guaranteed by imposing orthogonality
  to the columns of \eqref{eq:longorth}, 
  \begin{equation}  \label{eq:orthogonality}
  \begin{bmatrix}
  X\\
  \vdots\\
  XS^{\ell-1}
\end{bmatrix}^H    \begin{bmatrix}
  \hat{X}\\
  \vdots\\
  \hat{X}\hat{S}^{\ell-1}
\end{bmatrix}e_{p+1}=0
  \end{equation}
  In this way, we avoid
  \emph{reconvergence}, i.e.,
  if an eigenvalue is contained in $S$, the algorithm
  will not find this eigenvalue again, unless it has multiplicity greater than one.

 The condition that the  extended pair \eqref{eq:extendedpair} is invariant,
 is equivalent to a more explicit condition, shown in the following lemma.
\begin{lemma}[Lemma 6.1.1 of \cite{Effenberger:2013:PHD}]\label{lem:effenberger}Let $(X,S)$ be an
  invariant pair of the nonlinear eigenvalue problem \eqref{eq:NEP}.
  Then, the extended pair \eqref{eq:extendedpair}
  is an invariant pair if and only if
  \begin{equation}  \label{eq:invpairTU}
 M(\lambda)v+U(\lambda)u=0    
  \end{equation}
 where
 \begin{equation}  \label{eq:Udef}
 U(\lambda)=\frac{1}{2\pi i}\oint_\Gamma M(\xi)(\xi I-\Lambda)^{-1}(\xi-\lambda)^{-1}\,d\xi.
 \end{equation}
\end{lemma}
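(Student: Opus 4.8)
The plan is to reduce the statement to the standard holomorphic functional-calculus (contour-integral) characterization of invariant pairs and then exploit the block upper-triangular structure of $\hat{S}$. First I would rewrite the invariance condition in integral form: using the affine decomposition \eqref{eq:affine} and the Cauchy representation $f_i(S)=\frac{1}{2\pi i}\oint_\Gamma f_i(\xi)(\xi I-S)^{-1}\,d\xi$, valid for any contour $\Gamma$ enclosing $\sigma(S)$ inside the common domain of analyticity of the $f_i$, one has
\[
 \sum_{i=1}^m M_i X f_i(S)=\frac{1}{2\pi i}\oint_\Gamma M(\xi)X(\xi I-S)^{-1}\,d\xi .
\]
This integral is manifestly independent of the chosen decomposition, so ``$(X,S)$ is an invariant pair of \eqref{eq:NEP}'' is equivalent to its vanishing, and likewise $(\hat X,\hat S)$ in \eqref{eq:extendedpair} is an invariant pair if and only if $\frac{1}{2\pi i}\oint_{\hat\Gamma} M(\xi)\hat X(\xi I-\hat S)^{-1}\,d\xi=0$ for a contour $\hat\Gamma$ enclosing $\sigma(\hat S)\subseteq\sigma(S)\cup\{\lambda\}$.

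Next I would compute the resolvent of the block triangular matrix $\hat S=\begin{bmatrix}S&u\\0&\lambda\end{bmatrix}$, namely
\[
 (\xi I-\hat S)^{-1}=\begin{bmatrix}(\xi I-S)^{-1}&(\xi I-S)^{-1}u\,(\xi-\lambda)^{-1}\\0&(\xi-\lambda)^{-1}\end{bmatrix},
\]
so that $M(\xi)\hat X(\xi I-\hat S)^{-1}$ has first block column $M(\xi)X(\xi I-S)^{-1}$ and second block column $M(\xi)X(\xi I-S)^{-1}u\,(\xi-\lambda)^{-1}+M(\xi)v\,(\xi-\lambda)^{-1}$. Integrating over $\hat\Gamma$ block-column by block-column: the first block column is $\frac{1}{2\pi i}\oint_{\hat\Gamma}M(\xi)X(\xi I-S)^{-1}\,d\xi=\sum_i M_iXf_i(S)=0$ by the hypothesis that $(X,S)$ is invariant, the integral over $\hat\Gamma$ agreeing with the one over $\Gamma$ since $\hat\Gamma$ also encloses $\sigma(S)$. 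For the second block column, $\frac{1}{2\pi i}\oint_{\hat\Gamma}M(\xi)v\,(\xi-\lambda)^{-1}\,d\xi=M(\lambda)v$ by Cauchy's integral formula, since $\lambda$ lies inside $\hat\Gamma$ and $M(\xi)v$ is analytic, while $\frac{1}{2\pi i}\oint_{\hat\Gamma}M(\xi)X(\xi I-S)^{-1}u\,(\xi-\lambda)^{-1}\,d\xi=U(\lambda)u$ by the definition \eqref{eq:Udef} of $U$. Hence the full integral vanishes if and only if $M(\lambda)v+U(\lambda)u=0$, which is \eqref{eq:invpairTU}, establishing the equivalence.

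The main obstacle is the contour bookkeeping: one must fix a single admissible $\hat\Gamma$ that simultaneously encloses $\sigma(S)$ and $\lambda$ while lying inside the region of analyticity of $M$ (equivalently of all $f_i$), and then justify — via analyticity of $M(\xi)X(\xi I-S)^{-1}$ away from $\sigma(S)$ together with Cauchy's theorem — that the first block-column integral is unchanged when the contour is deformed onto $\sigma(S)$. A secondary delicate point is the case $\lambda\in\sigma(S)$, i.e.\ \emph{reconvergence} to an eigenvalue already present in $S$: then $(\xi I-S)^{-1}(\xi-\lambda)^{-1}$ has a pole of order at least two at $\lambda$, but all integrals remain well defined and the same residue computation applies, so the equivalence still holds; I would add a short remark covering this case. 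The remaining ingredients — the block-triangular resolvent identity and the identification of $f_i(S)$ with its Cauchy integral — are routine.
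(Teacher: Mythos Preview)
The paper does not supply its own proof of this lemma; it merely quotes the result from \cite{Effenberger:2013:PHD} and immediately moves on to the consequence \eqref{eq:Udef2}. Your argument is correct and is precisely the standard one: rewrite invariance via the Cauchy-integral functional calculus, use the block upper-triangular resolvent of $\hat S$, and read off the two block columns. This is essentially the proof given in Effenberger's thesis, so there is nothing to contrast.

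Two minor remarks. First, the statement as printed in the paper has a typo: the integrand in \eqref{eq:Udef} is missing the factor $X$ (and writes $\Lambda$ for $S$); your computation correctly produces $U(\lambda)=\frac{1}{2\pi i}\oint_\Gamma M(\xi)X(\xi I-S)^{-1}(\xi-\lambda)^{-1}\,d\xi$, consistent with \eqref{eq:Udef2}. Second, your handling of the contour deformation and of the degenerate case $\lambda\in\sigma(S)$ is appropriate and more careful than the paper, which simply sidesteps the issue by invoking \eqref{eq:Udef2} only when $\lambda\notin\lambda(S)$.
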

If $\lambda\not\in\lambda(S)$ we have additionally (as formalized in \cite[Lemma~6.2.2]{Effenberger:2013:PHD}) 
\begin{equation}  \label{eq:Udef2}
U(\lambda)=M(\lambda)X(\lambda I-S )^{-1}.
\end{equation}
In this work we will in practice extensively use \eqref{eq:Udef2}
rather than the slightly more general definition \eqref{eq:Udef}. 

We now focus on the case $\ell=1$; see Remark~\ref{rem:ind2} for discussion of general case.
By combining equation \eqref{eq:invpairTU} and \eqref{eq:orthogonality}
we reach the nonlinear eigenvalue problem corresponding to \eqref{eq:MNEP}
  where  $\begin{bmatrix} v^T&u^T  \end{bmatrix}^T$ is an eigenvector of
  the NEP $G(\lambda)$, given by \eqref{eq:MNEP}.

  This reasoning is formalized in the following theorem,
  which can be interpreted as a complement to
  \cite[Theorem~3.6]{Effenberger:2013:ROBUST}
  where we also stress that imposing orthogonality is not restricting
  the set of minimal invariant pairs. 
  We state the theorem in terms of similarity transformations.
  The pair $(X,S)$ is a minimal invariant pair, if and only if $(XZ,Z^{-1}SZ)$ is a
  minimal invariant where $Z\in\CC^{p\times p}$ is invertible
  \cite[Lemma~3.2.3]{Effenberger:2013:PHD}. We say that $(X,S)$ and $(XZ,Z^{-1}SZ)$ are
  \emph{equivalent by similarity transformation}.
  

  \begin{theorem}[Index one extensions]\label{thm:equiv}
    Suppose $(X,S)$ is a minimal invariant pair with index one.
    Then, all minimial invariant pairs with index one
    of the form \eqref{eq:extendedpair} are
    equivalent by similarity transformation
    to the minimal  invariant pairs with index one of the form \eqref{eq:extendedpair}
    where $v,u,\lambda$ are solutions to
\begin{equation}  \label{eq:extNEP}
  \begin{bmatrix}
  M(\lambda) & U(\lambda)\\
  X^H & 0 
\end{bmatrix}
\begin{bmatrix}
  v\\u\end{bmatrix}=0,
\end{equation}
and $\|v\|+\|u\|\neq 0$.
\end{theorem}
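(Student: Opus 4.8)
The plan is to show equivalence in two directions, with the similarity-transformation bookkeeping handled at the end. First, I would note that Lemma~\ref{lem:effenberger} already gives that $(\hat X,\hat S)$ in \eqref{eq:extendedpair} is an invariant pair if and only if $M(\lambda)v+U(\lambda)u=0$, i.e.\ the first block row of \eqref{eq:extNEP}. So the only extra content in \eqref{eq:extNEP} relative to plain invariance is the orthogonality row $X^Hv=0$, and the nontriviality condition $\|v\|+\|u\|\neq 0$. I would therefore reduce the statement to: \emph{among all invariant extensions \eqref{eq:extendedpair} of the fixed minimal index-one pair $(X,S)$, every minimal one is equivalent by a similarity transformation (acting on $\hat S$, i.e.\ fixing $X$ and changing $u,\lambda$ and rescaling $v$) to one satisfying $X^Hv=0$ with $(v,u)\neq 0$.}

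The core step is the normalization. Suppose $(\hat X,\hat S)$ is a minimal invariant pair of index one extending $(X,S)$, so in particular (index one) the stacked matrix is just $\hat X=[X\;\;v]$ and minimality means $[X\;\;v]$ has full column rank $p+1$; hence $v\notin\operatorname{range}(X)$. I want to replace $\hat S$ by $Z^{-1}\hat S Z$ for a block-upper-triangular invertible $Z=\begin{bmatrix}I & w\\ 0 & \zeta\end{bmatrix}$ (so that the leading $p\times p$ block $X$ of $\hat X Z$ is unchanged); this sends $\hat X=[X\;\;v]$ to $[X\;\;Xw+\zeta v]$ and transforms $(u,\lambda)$ correspondingly. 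Choose $\zeta\neq 0$ and $w$ so that $X^H(Xw+\zeta v)=0$, i.e.\ $w=-\zeta (X^HX)^{-1}X^Hv$, which is well defined because minimality of $(X,S)$ with index one forces $X$ to have full column rank so $X^HX$ is invertible. The new third column of $\hat X$ is $v':=Xw+\zeta v=\zeta\big(v-X(X^HX)^{-1}X^Hv\big)$, the (scaled) orthogonal projection of $v$ off $\operatorname{range}(X)$; since $v\notin\operatorname{range}(X)$ this is nonzero, so $\|v'\|+\|u'\|\neq 0$. Thus the similarity-transformed pair is again a minimal invariant pair of index one of the form \eqref{eq:extendedpair}, it satisfies $X^Hv'=0$, and by Lemma~\ref{lem:effenberger} its defining relation is exactly the first block row of \eqref{eq:extNEP}; together with $X^Hv'=0$ this is precisely \eqref{eq:extNEP}.

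For the converse direction I would observe that any $(v,u,\lambda)$ solving \eqref{eq:extNEP} with $(v,u)\neq 0$ yields, via Lemma~\ref{lem:effenberger}, an invariant extension \eqref{eq:extendedpair}; and I must check it is minimal of index one, i.e.\ that $[X\;\;v]$ has full column rank. If not, then $v\in\operatorname{range}(X)$, say $v=Xw$; then $X^Hv=X^HXw=0$ forces $w=0$ (again $X^HX$ invertible), so $v=0$, and then the first block row $U(\lambda)u=0$ with $U(\lambda)=M(\lambda)X(\lambda I-S)^{-1}$ from \eqref{eq:Udef2} — assuming $\lambda\notin\lambda(S)$; the case $\lambda\in\lambda(S)$ can be treated separately via the contour definition \eqref{eq:Udef}, or excluded since it would correspond to reconvergence — forces $u=0$ by minimality of $(X,S)$, contradicting $(v,u)\neq0$. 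So $[X\;\;v]$ has full column rank, the extension is minimal of index one, and it is (trivially) equivalent by the identity similarity transformation to itself. Finally I would wrap up by invoking \cite[Lemma~3.2.3]{Effenberger:2013:PHD} to confirm that the similarity transformations used preserve minimality and the invariant-pair property, so the correspondence is between equivalence classes as claimed.

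The main obstacle I anticipate is the clean handling of the minimality/index-one conditions under the block-triangular similarity: one must be careful that restricting to $Z$ of block-upper-triangular form (to keep the first $p$ columns equal to $X$ as required by the shape \eqref{eq:extendedpair}) is the right notion, and that $X^HX$ invertibility really does follow from index-one minimality of $(X,S)$ rather than needing to be assumed. A secondary subtlety is the degenerate case $\lambda\in\lambda(S)$, where \eqref{eq:Udef2} is unavailable and one must fall back on \eqref{eq:Udef}; I would either argue this case cannot produce a minimal extension (it is exactly the reconvergence situation) or dispatch it with the contour formula, but it needs an explicit sentence.
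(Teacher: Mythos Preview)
Your forward direction is essentially the paper's approach: orthogonalize $v$ against $\operatorname{range}(X)$ via a block upper-triangular similarity. The paper first reduces to $X$ orthonormal (QR) and $S$ upper triangular (Schur), after which your transformation collapses to $Z=\begin{bmatrix}I & -X^Hv\\ 0 & 1\end{bmatrix}$ since $X^HX=I$; you instead keep $X$ general and carry the factor $(X^HX)^{-1}$. Both are fine, and your observation that index-one minimality of $(X,S)$ is exactly what makes $X^HX$ invertible is correct.

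Your converse, however, overshoots and contains a real gap. The theorem only claims that minimal index-one extensions are similarity-equivalent to minimal index-one extensions satisfying \eqref{eq:extNEP}; since the latter set is contained in the former, that direction is immediate, and this is all the paper's one-sentence converse does. You instead try to prove the stronger statement that \emph{every} nontrivial solution of \eqref{eq:extNEP} yields a minimal index-one extension, and the step ``$U(\lambda)u=0$ forces $u=0$ by minimality of $(X,S)$'' is not valid: index-one minimality gives $X$ full column rank, not that $M(\lambda)X$ is injective. A concrete counterexample is $M(\lambda)=(\lambda-1)(\lambda-2)I$, $X=e_1$, $S=[1]$: then $\lambda=2\notin\lambda(S)$, $U(2)=M(2)e_1(2-1)^{-1}=0$, so $(v,u,\lambda)=(0,1,2)$ solves \eqref{eq:extNEP} nontrivially, yet $[X\;\;v]=[e_1\;\;0]$ is rank-deficient and the extension is only minimal with index two. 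The fix is simply to drop this stronger claim; once your forward direction is in place, the theorem is complete.
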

  \begin{proof}
    Let $Z=R^{-1}P$ be the similarity transformation, defined
    by the QR-factorization of $X=QR$ (where $Q\in\CC^{n\times p}$
    and $R\in\CC^{p\times p}$ is invertible
    since the columns of $X$ are linearly independent) and
    the Schur factorization $RSR^{-1}=PRP^H$.
    From this transformation we see that
    $(X,S)$ is equivalent by similarity transformation
    to $(XZ,Z^{-1}SZ)$ where $XZ$
    is orthogonal and $Z^{-1}SZ$ upper triangular. By a change
    of variable, the condition
    \eqref{eq:extNEP} is unmodified by the transformation. Hence,
    without loss of generality we can assume that $X$ is
    orthogonal and $S$ upper triangular.
   
    Suppose $(\hat{X},\hat{S})$ is an augmented
    minimal invariant pair (with extensions that do not necessarily satisfy \eqref{eq:extNEP}). With the similarity transformation 
    \[
    Z=\begin{bmatrix}
    I&-X^Hv\\
    0& 1
    \end{bmatrix}\in\CC^{(p+1)\times(p+1)}
    \]
    we can, by using Lemma~\ref{lem:effenberger}, verify 
    that \eqref{eq:extNEP} is satisfied by selecting
    vectors corresponding $(v,u,\lambda)$ as
    $(\hat{X}Ze_{p+1},  [I_p\;\; 0]Z^{-1}\hat{S}Ze_{p+1}, \lambda)$.
    The converse holds due to the fact that a solution
    satisfying \eqref{eq:extNEP} forms a vector $v$ which is
    orthogonal to $X$, and non-zero since the extension
    would otherwise be non-minimal.
%
%
%
  \end{proof}
\begin{remark}[Minimality index greater than one]\label{rem:ind2}
  The generalization of the above reasoning
  to a   higher minimality index can be seen as follows.
  The orthogonality condition \eqref{eq:orthogonality} with $\ell=2$
  implies that   $v,u,\lambda$ must satisfy
  \[
   (X^H+\lambda S^HX^H)v+S^HX^HXu=0.
  \]
  Unlike the case $\ell=1$, this expression depends on both $u$ and $\lambda$.
  As pointed out in a more general form in \cite{Effenberger:2013:ROBUST},
  the analogous NEP to \eqref{eq:extNEP} becomes
  \[
  \begin{bmatrix}
    M(\lambda)&U(\lambda)   \\
    X^H+\lambda S^HX^H& S^HX^HX
  \end{bmatrix}\begin{bmatrix}
    v\\
    u  \end{bmatrix}=0.
  \]
  Unfortunately, when we include a normalization condition as in \eqref{eq:F0},
  this problem does not lead to a nonlinear equation of the form
  \eqref{eq:structF} which we need for structured Broyden method.
  It includes more blocks and more $\lambda$-dependence,
  \[
0=
  \begin{bmatrix}
    M(\lambda)&U(\lambda)\\
    C_1^H(\lambda)& C_2^H\\
    c_1^H & 0
  \end{bmatrix}\begin{bmatrix}
    v\\
    u
  \end{bmatrix}-
  b\in\CC^{n+p+1},
  \]
  where $C_1^H(\lambda)=X^H+\lambda S^HX^H$ and $C_2^H=S^HX^HX$.
  This prevents us from using structured Broyden in the same way.
  We can however apply Broydens method (without exploiting
  the same amount of structure), which we illustrate in the
  simulations in Section~\ref{sec:simulations}. 
  In this sense, our algorithm presented in the next section 
  can in principle be constructed with a higher minimality index,
  but we cannot use the same amount of structure.
  In this paper we develop an efficient algorithm for
  $\ell=1$, and propose to use the slower variant without structure
  exploitation for  problems where eigenpairs share eigenvectors.
  For most problems
  stemming from PDEs, $\ell=1$ is generic. 
  
\end{remark}

%

\subsection{Structured Broyden with deflation}

The previous section showed that given an
index one invariant pair, we can compute an extension of
that invariant pair by solving the NEP \eqref{eq:MNEP}.
All extensions are represented by this extended
NEP according to Theorem~\ref{thm:equiv}. Since \eqref{eq:MNEP} combined
with the normalization condition with $c^H=[c_1^H\;\;0]$
leads to a nonlinear system of equation of the structure \eqref{eq:structF}
and we can use Algorithm~\ref{alg:structbroyden}  to solve it.

This structured extension of the invariant pair can be combined with
Algorithm~\ref{alg:structbroyden}.  Algorithm~\ref{alg:deflated}
shows this combination, including handling of
invariant pairs and starting values. We now provide
further details and justification of the algorithm,
and show how restarting can be incorporated.

Recall that our method is mainly intended for problems where
the matrix vector product $M(\lambda)z$ is computationally
expensive.
At step 8 of Algorithm~\ref{alg:structbroyden} we
need to compute the residual \eqref{eq:rkdef} which
in our setting contains terms $M(\lambda)v$ and $U(\lambda)u$;
each of these involving one matrix vector product with $M(\lambda)$.
When we use Algorithm~\ref{alg:structbroyden}
we can combine $M$ with the formula for $U$ in \eqref{eq:Udef2} and
compute the residual \eqref{eq:rkdef} directly by using only matrix vector product.
\[
  r_{k+1}=M(\lambda_{k+1})v_{k+1}+U(\lambda_{k+1})u_{k+1}=  M(\lambda_{k+1})(v_{k+1}+X(\lambda_{k+1} I -S)^{-1}u_{k+1}).
  \]

%
%
  Our algorithm requires starting values for each extension of the invariant pair. 
  Although starting values are usually tuned to the applications,
  and this can also be done in our case, we here propose
  a quite general application-independent procedure to select starting values.
  We base the starting values on previously computed information,
  which can  be viewed as a restarting procedure. 
  Starting values are required for $M_1=T_1^{-1}$,
  $W_1$, $v_1,u_1$ and $\lambda_1$. If we are interested in eigenvalues close to a target $\sigma$,
  we propose (Step 1) to use $M_1\approx M(\sigma)$
  (or $M_1=M(\sigma)$ if it can be computed cheaply) and set $\lambda_1=\sigma$.

  The eigenvector approximation ($v_1\in\CC^n$ and $u_1\in\CC^p$) are computed following an approximation of one
  step of  the  method called safeguarded iteration \cite[Algorithm~4]{Mehrmann:2004:NLEVP} in
  Step~6.
  Eigenvector approximations in safeguarded iteration are extracted by selecting the eigenvector
  corresponding to a small eigenvalue of the matrix $M(\lambda)$.
  We select $v_1$ and $u_1$ in this way but applied to the extended deflated NEP \eqref{eq:MNEP},
  by replacing the blocks of the matrix with approximations,
  $M(\lambda_1)\approx M_1$ and $U(\lambda_1)\approx U_0$, where
  $U_0$ is computed directly from  \eqref{eq:Udef2} by using 
  $p$ matrix vector products.

  We see by comparing \eqref{eq:JkstructT}  and \eqref{eq:Jstruct} that
  $W_0$ should be an approximation of $[U(\lambda_0)\;\; M'(\lambda_0)v_0+U'(\lambda_0)u_0]$.
  The approximation of $U(\lambda_0)$ is chosen as the already computed $U_0$.
  The formula for  $U(\lambda)$ in \eqref{eq:Udef2} gives us directly that
\begin{equation}  \label{eq:Uprime}
  U'(\lambda)=-M(\lambda)X(\lambda I-S)^{-2}+M'(\lambda)X(\lambda I-S)^{-1}.
\end{equation}
  In order to compute a starting value of the last column of $W_0$ we use
  that the chain-rule for differentiation applied to $U(\lambda)$ implies
  \begin{multline*}
  M'(\lambda_1)v_1+U'(\lambda_1)u_1=
  M'(\lambda_1)(v_1+X(\lambda_1 I-S)^{-1}u_1)-M(\lambda_1)X(\lambda_1 I-S)^{-2}u_1
  =\\ M'(\lambda_1)(v_1+X(\lambda_1 I-S)^{-1}u_1)-U(\lambda_1)(\lambda_1 I-S)^{-1}u_1.
  \end{multline*}
  Unless the matrix vector action of the derivative $M$ is explicitly available,
  the first term can be approximated by central finite difference, and the second term
  by using the already computed $U_1$, i.e., $U(\lambda_1)(\lambda_1 I-S)^{-1}u_1\approx U_1(\lambda_1 I-S)^{-1}u_1$. This is done in step 8.

%

In Step 11 we expand the invariant pair again if the problem
exhibits symmetry. It is straightforward to show that if
$\overline{M(\lambda)}=M(\overline{\lambda})$ for all $\lambda$,
then an eigenpair $(v,\lambda)$ implies that
$(\overline{v},\overline{\lambda})$ is an eigenpair which
can be included in the invariant pair if $\lambda\not\in\RR$.
The new complex conjugate  pair $(\overline{v},\overline{\lambda})$
is included by carrying out a Gram-Schmidt orthogonalization against $X$, and
storing the Gram-Schmidt coefficients in the new column of $S$.

\renewcommand{\algorithmicrequire}{\textbf{Input:}}
\renewcommand{\algorithmicensure}{\textbf{Output:}}
\begin{algorithm}
\begin{algorithmic}[1]
  \REQUIRE Target $\sigma$ and normalization vector $c\in\CC^{n}\backslash\{0\}$
\ENSURE A standard minimal invariant pair $(X,S)\in\CC^{n\times p}\times\CC^{p\times p}$ of \eqref{eq:NEP}
\STATE Compute $M_1\approx M(\sigma)$ and $T_1\approx M(\sigma)^{-1}$
\STATE Set $X=$empty matrix and $S=$empty matrix
\STATE Set $k=1$
\WHILE{$k<p$}
\STATE Compute $U_1\approx U(\sigma)\in\CC^{n\times (k-1)}$ where $U$ is given by \eqref{eq:Udef2} 
\STATE Compute the smallest (in modulus) eigenvalue  of the matrix
\[
\begin{bmatrix}
  M_1&U_1\\
  X^H& 0
\end{bmatrix}\in\CC^{(n+k-1)\times(n+k-1)}
\]
and let $\begin{bmatrix}v_1^T&u_1^T\end{bmatrix}^T$
  be the corresponding eigenvector normalized such that $c^Hv_1=1$.
  \STATE Impose orthogonalization \eqref{thm:structure} on $v_1$ by updating $v_1$ and $u_1$. 
  \STATE Compute $f_1\approx M'(\sigma)v_1-U_1(\sigma I-S)^{-1}u_1$, e.g. with finite difference for $M'(\sigma)v_1$. 
  \STATE Run structured Broyden for NEPs (Algorithm~1) with $C=\begin{bmatrix}X& c\end{bmatrix}$ starting value $(\sigma,v_1,u_1)$ and
  Jacobian approximation $(T_1,W_1)=(T_1,[U_1\;\;f_1])$. Save output in $(\lambda,v,u)$ and $(T_N,[U_N\;\;f_N])$
  \STATE Expand invariant pair according to \eqref{eq:extendedpair}.
  \STATE $k=k+1$
  \STATE If NEP has conjugate pair symmetry, expand also with conjugate eigenpair.
  \ENDWHILE
\end{algorithmic}
\caption{Deflated Broyden's method\label{alg:deflated}}
\end{algorithm}
%

%

\section{Convergence theory}\label{sec:convergence}
Due to its equivalence with Broyden's method,
the convergence of our approach can be characterized with
more general results. In particular,
Broyden's method has asymptotic local
superlinear convergence in general \cite{Gay:1979:BROYDENCONV,Broyden:1973:BDM}. 
However, the theory for superlinear convergence only holds under
the assumption that the Jacobian at the solution is
invertible. If this is not satisfied you can invoke theory for
Broyden's method of singular Jacobians \cite{Decker:2985:BROYDENSINGULAR},
which implies (in general) linear
convergence with a convergence factor equal to the reciprocal golden ratio.
We characterize the singularity of the Jacobian
of our particular problem.

The singularity
of the Jacobian of many iterative methods
for NEPs are directly given from the multiplicity
(or Jordan chain structure) of the solution to the NEP, cf. \cite{Szyld:2015:LOCALI,Jarlebring:2011:RESINVCONV,Jarlebring:2012:CONVFACT,Szyld:2015:LOCALII}.
Our construction is equivalent to applying Broyden's method
to the augmented NEP \eqref{eq:MNEP}. 
Therefore, the Jacobian singularity of the augmented system \eqref{eq:structF}
can be
characterized with the multiplicity of the augmented
NEP \eqref{eq:MNEP}.
Since the augmented problem is an artificially constructed NEP,
we find it more insightful to characterize the 
Jacobian singularity in terms of the
eigenvalue multiplicity of the original NEP \eqref{eq:NEP}.

We note that the Jacobian of \eqref{eq:structF}
is given by
\begin{equation}  \label{eq:Jacobian}
  J_*=\begin{bmatrix}
  M(\lambda)&U(\lambda) &M'(\lambda)v+U'(\lambda)u\\
  X^H&      &     \\
  c^H&      &   
\end{bmatrix}
\end{equation}
and provide two convergence results.
It turns out that the condition that the vector $c$ should not be orthogonal to the
eigenvector generalizes to the condition
that matrix
\begin{equation}\label{eq:orthcond_jordan}
\begin{bmatrix}
  X^H\\ c^H
\end{bmatrix}\begin{bmatrix}
  X&v_1\\
\end{bmatrix}
\end{equation}
needs to be non-singular, which is needed in the following theorem which gives a
precise condition for the Jacobian to be singular.

\begin{theorem}[Jacobian singularity]\label{thm:simple}
  Suppose $(X,S)$ is a
  minimal index one invariant pair of \eqref{eq:NEP},
  where $X$ is orthogonal and $S$ upper triangular.
  Suppose $\lambda_1,v_1$ is an eigenpair of \eqref{eq:NEP}
  such that 
  \eqref{eq:orthcond_jordan} is non-singular,
  and suppose $M(\lambda_1)$ has null space of dimension one.
  Moreover, assume  $\lambda_1\not\in\lambda(S)$ and
  $v_1\not\in\operatorname{Range}(X)$.
  Then
  \begin{equation}  \label{eq:vtransformation}
   \begin{bmatrix}
    v\\u\\\lambda
  \end{bmatrix}=
  \begin{bmatrix}
    (I-XX^H)v_1\\
    (\lambda_1I-S)X^Hv_1\\
    \lambda_1
  \end{bmatrix}
  \end{equation}
  is a solution to \eqref{eq:extNEP}.
  Moreover, the Jacobian \eqref{eq:Jacobian} corresponding
  to this solution is singular if and only if there exists
  a Jordan chain of length two, i.e., there
  exists a vector $v_2$ such that
  \begin{equation}  \label{eq:jordanlength_two}    
    M(\lambda_1)v_2+M'(\lambda_1)v_1=0.
  \end{equation}
  \end{theorem}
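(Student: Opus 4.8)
The plan is to prove the two assertions separately: that \eqref{eq:vtransformation} solves \eqref{eq:extNEP}, and then that the Jacobian \eqref{eq:Jacobian} at this point is singular exactly when a length-two Jordan chain exists. Throughout I use that $\lambda_1\not\in\lambda(S)$, so $U(\lambda_1)=M(\lambda_1)X(\lambda_1 I-S)^{-1}$ by \eqref{eq:Udef2}, and that $X^HX=I$.

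\textbf{The solution.} With $v=(I-XX^H)v_1$ and $u=(\lambda_1 I-S)X^Hv_1$, a direct substitution gives $M(\lambda_1)v+U(\lambda_1)u=M(\lambda_1)\bigl((I-XX^H)v_1+XX^Hv_1\bigr)=M(\lambda_1)v_1=0$ and $X^Hv=X^Hv_1-X^Hv_1=0$, so \eqref{eq:extNEP} holds; and $v\neq0$ because $v_1\not\in\operatorname{Range}(X)$, so $\|v\|+\|u\|\neq0$. Using \eqref{eq:Uprime} together with the identities $(\lambda_1 I-S)^{-1}u=X^Hv_1$ and $v+X(\lambda_1 I-S)^{-1}u=v_1$, the last column of \eqref{eq:Jacobian} at this solution simplifies to $d:=M'(\lambda_1)v+U'(\lambda_1)u=M'(\lambda_1)v_1-U(\lambda_1)X^Hv_1$.

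\textbf{Reduction of the kernel.} A triple $(w,s,\mu)\in\CC^n\times\CC^p\times\CC$ lies in $\ker J_*$ iff $M(\lambda_1)w+U(\lambda_1)s+\mu d=0$, $X^Hw=0$ and $c^Hw=0$. Substituting $U(\lambda_1)=M(\lambda_1)X(\lambda_1 I-S)^{-1}$ and the formula for $d$, and setting
\[
\hat w:=w+X(\lambda_1 I-S)^{-1}s-\mu X(\lambda_1 I-S)^{-1}X^Hv_1 ,
\]
the first equation becomes $M(\lambda_1)\hat w+\mu M'(\lambda_1)v_1=0$. Applying $X^H$ to the definition of $\hat w$ and using $X^Hw=0$ shows $s=\mu X^Hv_1+(\lambda_1 I-S)X^H\hat w$, and substituting back yields $w=(I-XX^H)\hat w$; conversely these formulas recover an admissible $(w,s,\mu)$ from any $(\hat w,\mu)$. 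Hence $(w,s,\mu)\mapsto(\hat w,\mu)$ is a linear bijection from $\ker J_*$ onto
\[
\mathcal S:=\bigl\{(\hat w,\mu):\ M(\lambda_1)\hat w+\mu M'(\lambda_1)v_1=0,\ c^H(I-XX^H)\hat w=0\bigr\},
\]
so $J_*$ is singular iff $\mathcal S\neq\{0\}$.

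\textbf{The equivalence.} A Schur complement of the $X^HX=I$ block shows \eqref{eq:orthcond_jordan} is nonsingular iff $\delta:=c^H(I-XX^H)v_1\neq0$. If $(\hat w,0)\in\mathcal S$ then $M(\lambda_1)\hat w=0$, so $\hat w=\beta v_1$ by the one-dimensional null-space hypothesis, and $c^H(I-XX^H)\hat w=\beta\delta=0$ forces $\beta=0$; thus every nonzero element of $\mathcal S$ has $\mu\neq0$. Normalising $\mu=1$, such an element is precisely a vector $v_2=\hat w$ satisfying \eqref{eq:jordanlength_two}, which gives the ``singular $\Rightarrow$ Jordan chain'' direction. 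Conversely, given any $v_2$ with $M(\lambda_1)v_2+M'(\lambda_1)v_1=0$, the vector $\hat w:=v_2-\delta^{-1}\bigl(c^H(I-XX^H)v_2\bigr)v_1$ still satisfies $M(\lambda_1)\hat w+M'(\lambda_1)v_1=0$ (since $M(\lambda_1)v_1=0$) and now also $c^H(I-XX^H)\hat w=0$, so $(\hat w,1)\in\mathcal S\setminus\{0\}$ and $J_*$ is singular.

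The main obstacle I anticipate is the bookkeeping in the kernel reduction: choosing $\hat w$ so that the two orthogonality constraints $X^Hw=0$, $c^Hw=0$ decouple from the first block row, and checking carefully that $(w,s,\mu)\leftrightarrow(\hat w,\mu)$ is a genuine bijection (in particular that $X^Hw=0$ holds automatically in the reconstruction). The hypotheses enter sharply in the last step: non-degeneracy of \eqref{eq:orthcond_jordan}, i.e.\ $\delta\neq0$, is exactly what rules out a spurious kernel element with $\mu=0$ and also what lets one correct an arbitrary $v_2$ to meet the side constraint; the one-dimensionality of $\ker M(\lambda_1)$ is likewise essential, since for a semisimple double eigenvalue $J_*$ would be singular with no length-two chain.
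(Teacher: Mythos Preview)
Your proof is correct and follows essentially the same route as the paper: the paper also reduces the first block row of $J_*z=0$ to $M(\lambda_1)\bigl(z_1+X(\lambda_1 I-S)^{-1}z_2-z_3X(\lambda_1 I-S)^{-1}X^Hv_1\bigr)+z_3M'(\lambda_1)v_1=0$ (your $\hat w$ is exactly the bracketed vector) and then does a case split on $z_3=0$ versus $z_3\neq 0$, using invertibility of \eqref{eq:orthcond_jordan} to rule out the first case and to build a singular vector from $v_2$ in the second. Your packaging of this as a linear bijection $\ker J_*\leftrightarrow\mathcal S$ and your identification of the Schur complement $\delta=c^H(I-XX^H)v_1$ make the argument a bit tidier than the paper's, but the underlying computation is the same.
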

\begin{proof}
We verify \eqref{eq:vtransformation} directly by inserting into \eqref{eq:extNEP} and using
the formula for  $U$ in \eqref{eq:Udef2} and that $M(\lambda_1)v_1=0$.
In order to establish when the Jacobian is singular we give
necessary and sufficient conditions for
the existence of non-trivial $z_1,z_2,z_3$ such that
\begin{equation}\label{eq:singvect}
\begin{bmatrix}
  M(\lambda)&U(\lambda) &M'(\lambda)v+U'(\lambda)u\\
  X^H&      &     \\
  c^H&      &   
\end{bmatrix}\begin{bmatrix}
  z_1\\
  z_2\\
  z_3
\end{bmatrix}=0.
\end{equation}
By using the formula for $U'(\lambda_1)$ in \eqref{eq:Uprime}
and the formula for the solution vector \eqref{eq:vtransformation},
the first block equation becomes
\begin{equation}  \label{eq:singvect_firstrow}
 M(\lambda_1)(z_1+X(\lambda I-S)^{-1}z_2+X(\lambda I-S)^{-1}X^Hv_1z_3)
+M'(\lambda_1)v_1z_3=0
\end{equation}
We separate the rest of the proof into two cases.
\begin{itemize}
\item Suppose $z_3=0$, such that \eqref{eq:singvect_firstrow}
  reduces to $M(\lambda)(z_1+X(\lambda I-S)^{-1})z_2)=0$.
  Since $M(\lambda_1)$ has a one-dimensional null space, we
must have $z_1+X(\lambda I-S)^{-1})z_2=\beta v_1$.
By multiplication from left with $\begin{bmatrix}X&c\end{bmatrix}^H$,
  combining this with the last rows in \eqref{eq:singvect} 
  and using the assumption \eqref{eq:orthcond_jordan} we see that  $\beta=0$
  and $z_2=0$. Consequently, $z_1=0$, such that $z_1,z_2,z_3$ are
  identically zero and do  not
  form a non-trivial singular vector. Hence,
  any non-trivial singular vector must satisfy $z_3\neq 0$.
\item If we assume that $z_3\neq 0$, we can without loss of generality
  assume that $z_3=1$.
  Clearly \eqref{eq:singvect_firstrow} can only be zero if there exists
  a vector $v_2$ such that \eqref{eq:jordanlength_two} is satisfied.
Moreover, $z_1$
and $z_2$ must satisfy for some value $\beta$, 
\[
z_1+X(\lambda I-S)^{-1}z_2+X(\lambda I-S)^{-1}X^Hv_1=v_2+\beta v_1
\]
We obtain that
\[
\left(\begin{bmatrix}
  X^H\\ c^H
\end{bmatrix}\begin{bmatrix}
  X&v_1\\
\end{bmatrix}\right) \begin{bmatrix}
  (\lambda I-S)^{-1}z_2\\
-\beta  
\end{bmatrix}=
\begin{bmatrix}
  X^H\\ c^H
\end{bmatrix}\left(-X(\lambda I-S)^{-1}X^Hv_1+v_2\right)
\]
This linear system has a solution since \eqref{eq:orthcond_jordan}
is invertible by assumption, and
directly gives us a singular vector from a vector $v_2$ satisfying \eqref{eq:jordanlength_two}.
It is non-trivial since $z_3=1$.
\end{itemize}

%
\end{proof}

  \begin{example}[Double eigenvalue]\rm
    The convergence properties for a singular Jacobian matrix
    can be observed in practice, and we illustrate this with the NEP  
    presented in \cite{Jarlebring:2010:INVARIANCE}
    (and also \cite{Jarlebring:2012:CONVFACT,Jarlebring:2011:RESINVCONV}),
    The problem is a delay eigenvalue problem $M(\lambda)=-\lambda I+A_0+A_1e^{-\tau\lambda}$
    constructed such that it
    has a double non-semisimple
    eigenvalue at $\lambda_*=3\pi i$.
    The error history of Algorithm~\ref{alg:deflated} with
    $\sigma=0$ and $M_0=M(\sigma)$ is given in Figure~\ref{fig:double_eig_conv}.
    We clearly see 
    that we have linear convergence the first time the iteration converges
    to $\lambda_*$. The second time the iteration converges to
    $\lambda_*$ we have superlinear convergence, consistent with the fact that
    the eigenvalue has multiplicity two (and not three).
    Once one of the double eigenvalues has been deflated,
    the Jacobian is singular, i.e.,  the convergence   behaves as the
    convergence  for simple eigenvalue.

%
    %

    We also observe (consistent  with theory \cite{Decker:2985:BROYDENSINGULAR})
that the linear convergence has convergence factor equal to the reciprocal golden ratio, i.e., approximmately $0.618$.
    
    \end{example}
  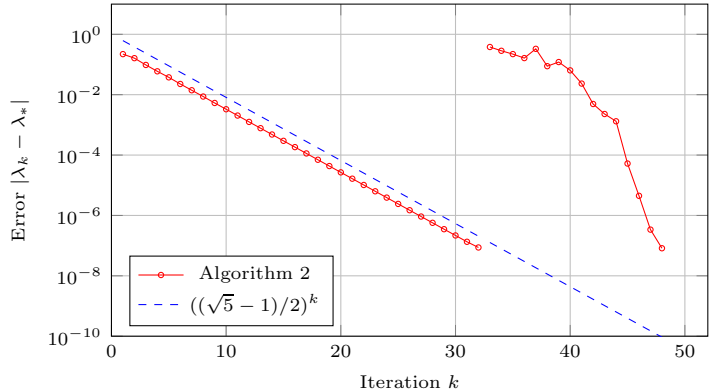
\begin{figure}[h]
    \begin{center}
       \tikzsetnextfilename{myfig1} 

\setlength\figureheight{6cm} 
\setlength\figurewidth{10cm}
\begin{tikzpicture}
\begin{axis}[%
width=0.951\figurewidth,
height=\figureheight,
xmin=0,
xmax=52,
xlabel={Iteration $k$},
ymode=log,
ymin=1e-10,
ytick={1e2,1,1e-2,1e-4,1e-6,1e-8,1e-10,1e-12,1e-14,1e-16,1e-18,1e-22,1e-26,1e-30,1e-34},
ymax=10,
ylabel={Error $|\lambda_k-\lambda_*|$},
grid,
legend pos=south west,
unbounded coords=jump
]
\addlegendentry{Algorithm~2};
  \addplot[    mark=o, red,
    mark options={fill=red},
    mark size=1.0pt,
    ] table [x index=0, y index=1, col sep=comma]
   {double_eig_conv_data.csv};
\addlegendentry{ $((\sqrt{5}-1)/2)^k$};
\addplot[    blue, dashed
    ] table [x index=0, y index=2, col sep=comma]
   {double_eig_conv_data.csv};
\end{axis}
\end{tikzpicture}
       \caption{Convergence for the problem with a double eigenvalue at $\lambda_*=3\pi i$.
         We clearly observe linear convergence the first time the iteration
         converges to $\lambda_*$, and fast superlinear convergence the second time it
         converges to $\lambda_*$.
        \label{fig:double_eig_conv}
      }
    \end{center}
  \end{figure}
  \section{Simulations for quadratic time-delay system}\label{sec:simulations}
\begin{figure}[h]
  \begin{center}
    \subfigure[Error vs iteration]{\setlength\figureheight{6cm} 
\setlength\figurewidth{6cm}
\begin{tikzpicture}
\begin{axis}[%
width=2.3*0.951\figurewidth,
height=0.8\figureheight,
xlabel={Iteration $k$},
ymode=log,
ymin=1e-15,
ytick={1e2,1,1e-2,1e-4,1e-6,1e-8,1e-10,1e-12,1e-14,1e-16,1e-18,1e-22,1e-26,1e-30,1e-34},
ymax=1,
xmax=1600,
xmin=0,
ylabel={Residual norm},
grid,
legend pos=north east,
unbounded coords=jump
]
 \addlegendentry{$J$};
  \addplot[    mark=+, 
    mark options={solid},
    mark size=2.0pt,
    ] table [x index=0, y index=1, col sep=comma]
   {num_comp1_J_data.csv};
 \addlegendentry{$H$};
  \addplot[    mark=x, 
    mark options={solid},
    mark size=2.0pt, blue
    ] table [x index=0, y index=1, col sep=comma]
   {num_comp1_H_data.csv};
 \addlegendentry{$T$};
  \addplot[    mark=*, red,
    mark size=1.0pt,
    mark options={fill=red},
    ] table [x index=0, y index=1, col sep=comma]
   {num_comp1_T_data.csv};

\end{axis}
\end{tikzpicture}}
    \subfigure[Error vs wall time]{\setlength\figureheight{6cm} 
\setlength\figurewidth{6cm}
\begin{tikzpicture}
\begin{axis}[%
width=2.3*0.951\figurewidth,
height=0.8\figureheight,
xlabel={Wall time},
ymode=log,
ymin=1e-15,
ytick={1e2,1,1e-2,1e-4,1e-6,1e-8,1e-10,1e-12,1e-14,1e-16,1e-18,1e-22,1e-26,1e-30,1e-34},
ymax=1,
xmin=0,
ylabel={Residual norm},
grid,
unbounded coords=jump
]
  \addplot[    mark=+, 
    mark options={solid},
    mark size=2.0pt,
    ] table [x index=2, y index=1, col sep=comma]
   {num_comp1_J_data.csv};
  \addplot[    mark=x, 
    mark options={solid},
    mark size=2.0pt, blue
    ] table [x index=2, y index=1, col sep=comma]
   {num_comp1_H_data.csv};
  \addplot[    mark=*, red,
    mark size=1.0pt,
    mark options={fill=red},
    ] table [x index=2, y index=1, col sep=comma]
   {num_comp1_T_data.csv};

\end{axis}
\end{tikzpicture}}
    \caption{Comparison of structure exploiting and non-exploiting Broyden's method.
      \label{fig:num_comp1}
    }
  \end{center}
\end{figure}
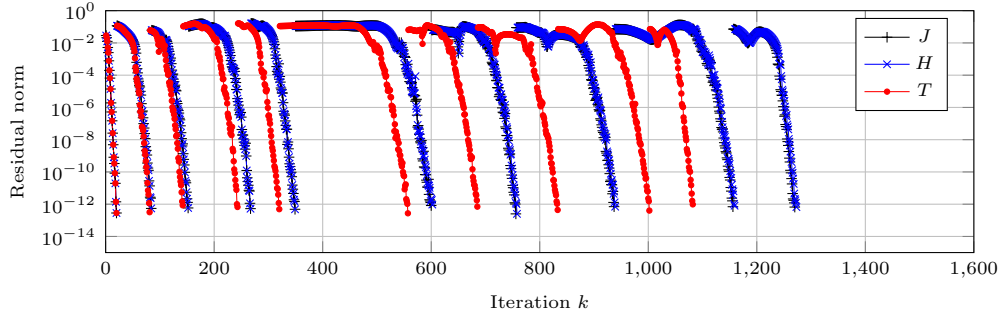
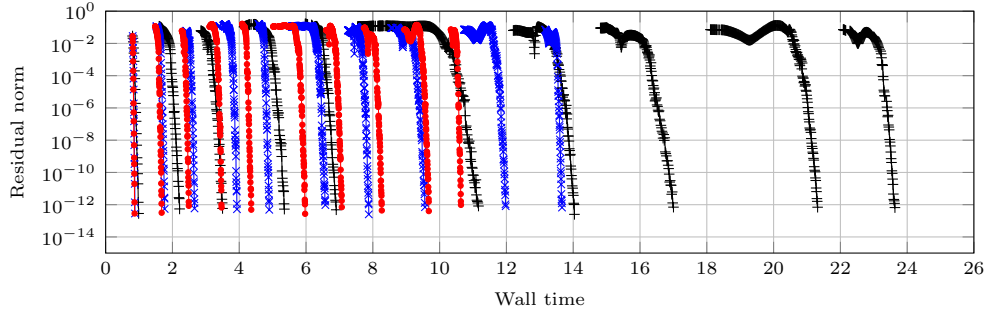
\begin{figure}[h]
  \begin{center}
\setlength\figureheight{6cm} 
\setlength\figurewidth{10cm}
\begin{tikzpicture}
\begin{axis}[%
width=0.951\figurewidth,
height=\figureheight,
xlabel={Iteration $k$},
ymode=log,
ymin=1e-11,
ytick={1e6,1e4,1e2,1,1e-2,1e-4,1e-6,1e-8,1e-10,1e-12,1e-14,1e-16,1e-18,1e-22,1e-26,1e-30,1e-34},
ymax=1e5,
xmin=0,
xmax=500,
ylabel={Residual norm},
grid,
legend pos=north east,
unbounded coords=jump
]
 \addlegendentry{$\ell=1$ (Algorithm~1)};
  \addplot[    mark=*, 
    mark size=0.7pt,
    mark options={fill=red},
    red
    ] table [x index=0, y index=1, col sep=comma]
   {num_comp_ell2_1_data.csv};
 \addlegendentry{$\ell=2$};
  \addplot[    mark=*, 
    mark size=0.7pt,
    mark options={fill=blue},
    blue,dotted
    ] table [x index=0, y index=1, col sep=comma]
   {num_comp_ell2_2_data.csv};
\end{axis}
\end{tikzpicture}
\caption{Comparison with minimality index one or two.
  \label{fig:num_comp_ell2}
    }
  \end{center}
\end{figure}
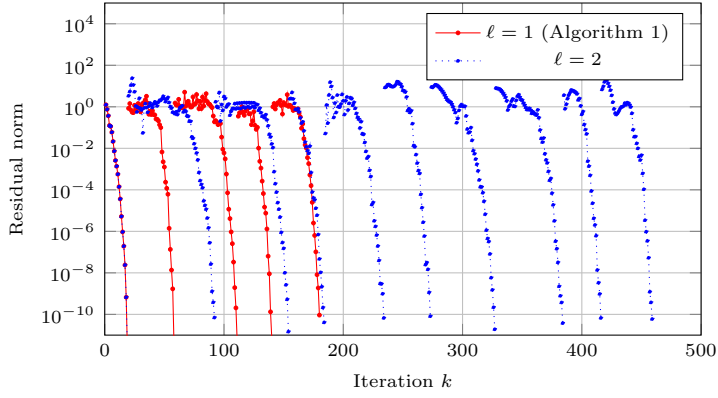

We provide results of simulations for various NEPs. Our
implementation is in the Julia programming language \cite{Bezanson:2017:JULIA}, version 0.6.2 using a quad-core, 16 GB RAM, Intel i7-4600U CPU with 2.10GHz\footnote{The simulations are publicly available
  online: \url{http://www.math.kth.se/~eliasj/src/broyden}}.

In order to show the properties of our approach we apply now
apply the algorithm to the following problem
\[ 
  M(\lambda)=-\lambda^2 I+ A_0+A_1e^{-\lambda}
\]
where the matrices are the same as those in \cite{Gaaf:2017:INFBILANCZOS}. 
The simulations of this section are intended to illustrate
method properties, and we do not claim that this method
is the best method for this type of problem.

We first illustrate the structure exploiation. In Figure~\ref{fig:num_comp1}
we see the convergence of the discussed versions of Broyden's method.
Figure~\ref{fig:num_comp1}a and Figure~\ref{fig:num_comp1}b
show the same simulation but with different $x$-axis.
The structure exploiting Broyden method (Algorithm~\ref{alg:structbroyden})
converges (slightly) faster in terms of iterations, although they are equivalent
in exact arithmetic. The structure exploiting Broyden method
is considerably faster than the other variants in terms of computation time.

The relevance of the damping is illustrated in Figure~\ref{fig:num_comp2}.
No damping (or a very large $t$) typically leads to faster convergence,
but robustness is lost as the solution can start diverging.
The parameter $t$ can be viewed as a 
trade-off parameter,  between robustness
and convergence speed.

In order to illustrate the value of superlinear convergence, we
compare the algorithm residual inverse iteration as described in \cite{Neumaier:1985:RESINV}, which is
a very well established method.
Residual inverse iteration is an implicit quasi-Newton method \cite{Jarlebring:2017:QUASINEWTON}
and exhibits linear convergence. We see in Figure~\ref{fig:num_comp3}
that our the proposed method is faster in terms of iterations.
In the residual inverse iteration
we have pre-computed an LU-factorization,
in order speed up the computation of the linear solves.

In order to illustrate that a higher minimality index
can allow you to compute more than $n$ eigenvalues,
we adapted to idea described in Remark~\ref{rem:ind2}.
A comparison with $\ell=2$ can be seen in
Figure~\ref{fig:num_comp_ell2} with $n=5$. 
Minimality index $\ell=2$ provides the
possibility to compute $2n=10$ eigenvalues.

%









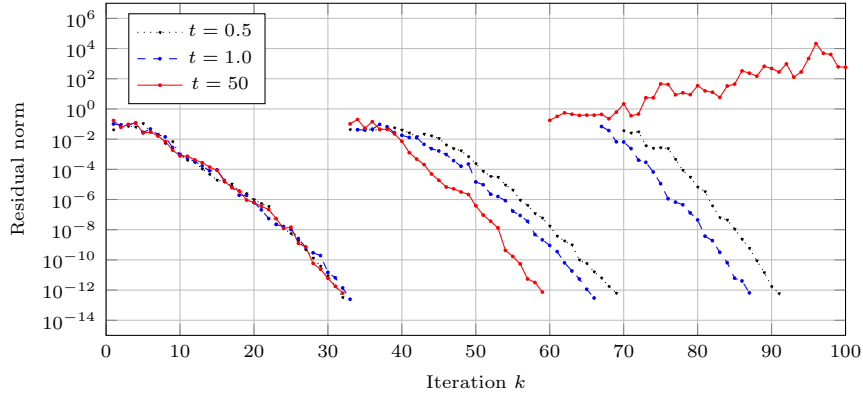
\begin{figure}[h]
  \begin{center}
\setlength\figureheight{6cm} 
\setlength\figurewidth{6cm}
\begin{tikzpicture}
\begin{axis}[%
width=2*0.951\figurewidth,
height=\figureheight,
xlabel={Iteration $k$},
ymode=log,
ymin=1e-15,
ytick={1e6,1e4,1e2,1,1e-2,1e-4,1e-6,1e-8,1e-10,1e-12,1e-14,1e-16,1e-18,1e-22,1e-26,1e-30,1e-34},
ymax=1e7,
xmin=0,
xmax=100,
ylabel={Residual norm},
grid,
legend pos=north west,
unbounded coords=jump
]
 \addlegendentry{$t=0.5$};
  \addplot[    mark=*, 
    mark size=0.5pt,
    mark options={fill=black},
    dotted
    ] table [x index=0, y index=1, col sep=comma]
   {num_comp2_1_data.csv};
 \addlegendentry{$t=1.0$};
  \addplot[    mark=*, 
    mark size=0.5pt,
    mark options={fill=blue},
    blue,
    dashed
    ] table [x index=0, y index=1, col sep=comma]
   {num_comp2_2_data.csv};
 \addlegendentry{$t=50$};
  \addplot[    mark=*, red,
    mark size=0.5pt,
    mark options={fill=red},
    ] table [x index=0, y index=1, col sep=comma]
   {num_comp2_3_data.csv};

\end{axis}
\end{tikzpicture}
    \caption{Comparison for different threshold parameters
      \label{fig:num_comp2}
    }
  \end{center}
\end{figure}

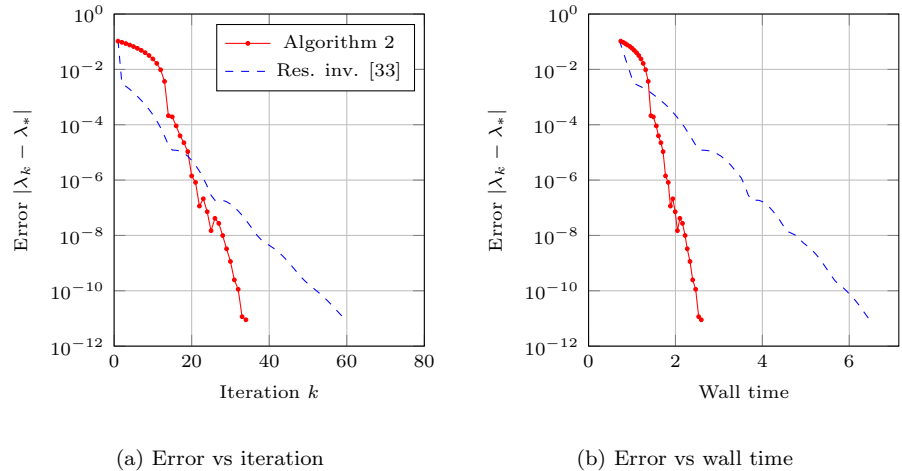
\begin{figure}[h]
  \begin{center}
    \subfigure[Error vs iteration]{\setlength\figureheight{6cm} 
\setlength\figurewidth{6cm}
\begin{tikzpicture}
\begin{axis}[%
width=0.951\figurewidth,
height=\figureheight,
xlabel={Iteration $k$},
ymode=log,
ymin=1e-12,
ytick={1e6,1e4,1e2,1,1e-2,1e-4,1e-6,1e-8,1e-10,1e-12,1e-14,1e-16,1e-18,1e-22,1e-26,1e-30,1e-34},
ymax=1,
xmin=0,
xmax=80,
ylabel={Error $|\lambda_k-\lambda_*|$},
grid,
legend pos=north east,
unbounded coords=jump
]
 \addlegendentry{Algorithm~2};
  \addplot[    mark=*, 
    mark size=0.7pt,
    mark options={fill=red},
    red
    ] table [x index=0, y index=1, col sep=comma]
   {num_comp3_T_data.csv};
 \addlegendentry{Res. inv. \cite{Neumaier:1985:RESINV}};
  \addplot[    mark=d, 
    mark size=0.7pt,
    mark options={fill=blue},
    blue,
    dashed
    ] table [x index=0, y index=1, col sep=comma]
   {num_comp3_R_data.csv};
\end{axis}
\end{tikzpicture}}
    \subfigure[Error vs wall time]{\setlength\figureheight{6cm} 
\setlength\figurewidth{6cm}
\begin{tikzpicture}
\begin{axis}[%
width=0.951\figurewidth,
height=\figureheight,
xlabel={Wall time},
ymode=log,
ymin=1e-12,
ytick={1e6,1e4,1e2,1,1e-2,1e-4,1e-6,1e-8,1e-10,1e-12,1e-14,1e-16,1e-18,1e-22,1e-26,1e-30,1e-34},
ymax=1,
xmin=0,
ylabel={Error $|\lambda_k-\lambda_*|$},
grid,
legend pos=north east,
unbounded coords=jump
]
  \addplot[    mark=*, 
    mark size=0.7pt,
    mark options={fill=red},
    red
    ] table [x index=2, y index=1, col sep=comma]
   {num_comp3_T_data.csv};
  \addplot[    mark=d, 
    mark size=0.7pt,
    mark options={fill=blue},
    blue,
    dashed
    ] table [x index=2, y index=1, col sep=comma]
   {num_comp3_R_data.csv};
\end{axis}
\end{tikzpicture}}    
    \caption{Comparison with residual inverse iteration
      \label{fig:num_comp3}
    }
  \end{center}
\end{figure}

\section{Simulations for time-periodic delay-differential equation}\label{sec:TPDDE}
The following problem is called time-periodic delay-differential
equation. We consider a linear (time-varying)
delay-differential equation
\begin{equation}  \label{eq:PDDE}
   \dot{y}(t)=A(t)y(t)+B(t)y(t-\tau)  
\end{equation}
where $A(t),B(t)\in\CC^{n\times n}$ are periodic functions
with period $\tau$.
We briefly summarize a  stability characterization which leads to a NEP.
See description of
certain applications \cite{MANN200335}
and references therein and a number of 
numerical methods 
\cite{Szalai:2006:CONTINUATION}
\cite{Insberger:2002:MATHIEU} 
   \cite{Insperger:2004:UPDATEDSEMIDISC}
   \cite{Insperger:2002:PERIODICDDES} 
   \cite{Breda:2006:PERIODIC}   for details.
   The observervation that \eqref{eq:PDDE}
   can be characterized with a  NEP
   was also used in \cite{Rott:2010:ITERATIVE}.
We consider the ODE (without delay) associated with \eqref{eq:PDDE}
   \begin{equation}  \label{eq:PODE}
       \dot{p}(t)=C(t,\lambda)p(t),
   \end{equation}
   where
   \[
   C(t,\lambda):=A(t)+B(t)e^{-\lambda \tau}-\lambda I.
   \]
   We define a NEP $M(\lambda)$ by the action on a vector as 
   \begin{equation}  \label{eq:PDDE_NEP}     
    M(\lambda)v=p(\tau)-v
   \end{equation}
    where $p(\tau)$ is the solution $p(t)$ of \eqref{eq:PODE}
    at $t=\tau$ with initial condition
    \[
   p(0)=v.
   \]
   The solutions of the NEP defined by \eqref{eq:PDDE_NEP},
   correspond to  $(\lambda,v)$ such that  $p(\tau)=v=p(0)$, i.e.,
   the starting value and
   final vector of $p$ are the same and $p$ can be viewed as a
   periodic function.  From Floquet theory one can
   show that the stability of \eqref{eq:PDDE}
   is determined from the right-most solution $\lambda$.
The value $\mu=e^{\tau\lambda}$ is
   called the characteristic multiplier, which is greater than
   one for right-half plane solutions to the NEP.

   Note that the NEP given by \eqref{eq:PDDE_NEP}, has an action defined
   by a solution to an ODE, i.e., the action is compuationally
   expensive and it is 
   of the type we consider in this work.

   \subsection{Benchmark problem}\label{sec:milling1} Time-periodic
   time-delay systems has been considerably used in models
   and studied in for specific applications in the literature.
   Certain vibrations 
   in machine tool milling can be modeled with time-periodic
   time-delay systems, where dominant modes correspond
   to the undesirable machine tool chatter. The delay in this case stems from the
   fact that the cut of the previous lap has an influence
   on the current lap. The periodicity stems from the periodicity
   in the force, and modeling of the cutting tooth which is periodic
   in time due to the rotation.
   We consider a specific setup used as a benchmark in
   several papers. See  \cite{Insberger:2002:MATHIEU} 
   \cite{Insperger:2004:UPDATEDSEMIDISC}
   \cite{Insperger:2002:PERIODICDDES} and references therein.
   The equations of motion are second order but can be
   reformulated into a first order time-periodic time-delay system
   \[
\dot{y}=
\begin{bmatrix}
  0&1\\
   -\omega_0^2-\frac{a_pw(t)}{m}&-2\zeta\omega_0
\end{bmatrix}
y(t)+\\
\begin{bmatrix}
  0&0\\
\frac{a_pw(t)}{m}&0
\end{bmatrix}y(t-\tau)
\]
By consideration of the projection of the application
of the force (as described in \cite{Rott:2010:ITERATIVE}),
the time-periodic coefficient becomes 
\[
 w(t)=H(t-\tau/2)(\sin^2(\phi(t))K_R+\cos(\phi(t))\sin(\phi(t))K_T)
 \]
where $H(t)$ is the heaviside function and $\phi(t)=2\pi t /\tau$.
We see that if the force modeling is not considered, $w(t)$ is
constant the problem reduces to the (easier) standard time-delay system.

We carried out simulations with parameters $a_p=m=\tau=\omega_0=\zeta=1$,
and solved the time-dependent ODE with Runge-Kutta 4 with $N$ discretization
points.
The convergence as a function of iteration is given in Figure~\ref{fig:milling1}b.

One of the most successful numerical approaches
for this problem correspond to discretizations
of operator formulations, e.g., a spectral
discretization of the monodromy operator in
\cite{Bueler:2007:ERROR} and \cite{Breda:2006:PERIODIC}.
In practice, this involves the solution of a large (linear)
eigenvalue problem. 
A comparison with the approach in \cite{Bueler:2007:ERROR} is
shown in Figure~\ref{fig:milling1}a. We clearly see that 
the discretization in \cite{Bueler:2007:ERROR}
and our approach  lead to  algebraic convergence, of similar order.
That is, although a spectral discretization is
used in \cite{Bueler:2007:ERROR}, the observed convergence
with respect to ODE-discretization is not exponential, but
only algebraic. This is expected since the $A(t)$
has a discontinuous derivative, and one cannot in general
expect exponential convergence for PDEs which
have discontinuous derivatives.

\begin{figure}[h]
  \begin{center}
    \subfigure[Continuous problem error vs time-discretization]{\setlength\figureheight{6cm} 
\setlength\figurewidth{6cm}
\begin{tikzpicture}
\begin{axis}[%
width=0.951\figurewidth,
height=\figureheight,
xlabel={Discretization points (in time)},
ymode=log,
xmode=log,
ymin=1e-9,
ymax=1e3,
xmin=1e-3,
xmax=1e0,
ylabel={Eigenvalue error},
grid,
legend pos=south east,
unbounded coords=jump
]
 \addlegendentry{Algorithm~2 };
  \addplot[    mark=*, 
    mark size=0.7pt,
    mark options={fill=red},
    red
    ] table [x index=0, y index=1, col sep=comma]
   {milling1_data.csv};
 \addlegendentry{Cheb. discr. \cite{Bueler:2007:ERROR}};
  \addplot[    mark=+, 
    mark size=0.7pt,
    blue
    ] table [x index=0, y index=2, col sep=comma]
   {milling1_data.csv};
\end{axis}
\end{tikzpicture}}
    \subfigure[Continuous problem error of continuous vs iteration]{\setlength\figureheight{6cm} 
\setlength\figurewidth{6cm}
\begin{tikzpicture}
\begin{axis}[%
width=0.951\figurewidth,
height=\figureheight,
xlabel={Iteration $k$},
ymode=log,
ymin=1e-11,
ytick={1e6,1e4,1e2,1,1e-2,1e-4,1e-6,1e-8,1e-10,1e-12,1e-14,1e-16,1e-18,1e-22,1e-26,1e-30,1e-34},
ymax=10,
xmin=0,
ylabel={Eigenvalue error},
grid,
legend pos=south west,
unbounded coords=jump
]
 \addlegendentry{$N=10$};
  \addplot[    mark=*, 
    mark size=0.7pt,
    mark options={fill=blue},
    blue,dotted
    ] table [x index=0, y index=1, col sep=comma]
   {milling1_iter1_data.csv};
 \addlegendentry{$N=100$};
  \addplot[    mark=*, 
    mark size=0.7pt,
    mark options={fill=magenta},
    magenta,dashed,
    ] table [x index=0, y index=1, col sep=comma]
   {milling1_iter2_data.csv};
 \addlegendentry{$N=1000$};
  \addplot[    mark=*, 
    mark size=0.7pt,
    mark options={fill=red},
    red,
    ] table [x index=0, y index=1, col sep=comma]
   {milling1_iter3_data.csv};
\end{axis}
\end{tikzpicture}}
    \caption{Simulation with benchmark problem in Section~\ref{sec:milling1}
      for different time-discretizations
      \label{fig:milling1}
    }
  \end{center}
\end{figure}

\subsection{Benchmark problem with  PDE coupling}\label{sec:milling}
In order to also take into account vibrations in
the workpiece in the milling, a model which couples a PDE 
was presented in \cite{Rott:2010:ITERATIVE}. A discretization
of the PDE leads to the following problem.
Let $D_{xx}=\frac{1}{h}\operatorname{tridiag}(1,-2,1)
\in\RR^{N\times N}$
where $h=1/N$.
The identity operator in the finite-element basis is denoted  $P^{-1}$
and $p_n=Pe_n$.
The time-periodic time-delay system is now given by 
\begin{multline*}
\dot{y}(t)=
\begin{bmatrix}
  & &I&\\
  & &&1\\
  -\epsilon PD_{xx}-\frac{a_pw(t)}{A}p_Ne_N^T&   -\frac{a_pw(t)}{A}e_N&-dPD_{xx}&\\
   -\frac{a_pw(t)}{m}e_N^T &-\omega_0^2-\frac{a_pw(t)}{m}&&-2\zeta\omega_0
\end{bmatrix}
y(t)+\\
\begin{bmatrix}
  & &&\\
  & &&\\
  \frac{a_pw(t)}{A}p_Ne_N^T&\frac{a_pw(t)}{A}p_N&\phantom{0}&\\
   \frac{a_pw(t)}{m}e_N^T&\frac{a_pw(t)}{m}&&
\end{bmatrix}y(t-\tau).
\end{multline*}
We carried out simulations for a discretization with  $N=5000$, i.e., $n=10002$ on a
computer with 64 GB of RAM. The results
are presented in Figure~\ref{fig:big}.  The action of the ODE was discretized with $N=15$,
whereas the the approximation of $M(\sigma)$ was computed with $N=7$.
The problem is stiff, and we therefore used an implicit time-stepping scheme.
The inverse of the identity was treated in a way that avoids computing
a full matrix. This problem is of such size that our implementation of 
the approach of \cite{Bueler:2007:ERROR} was not applicable due
to the high demand of memory resources.
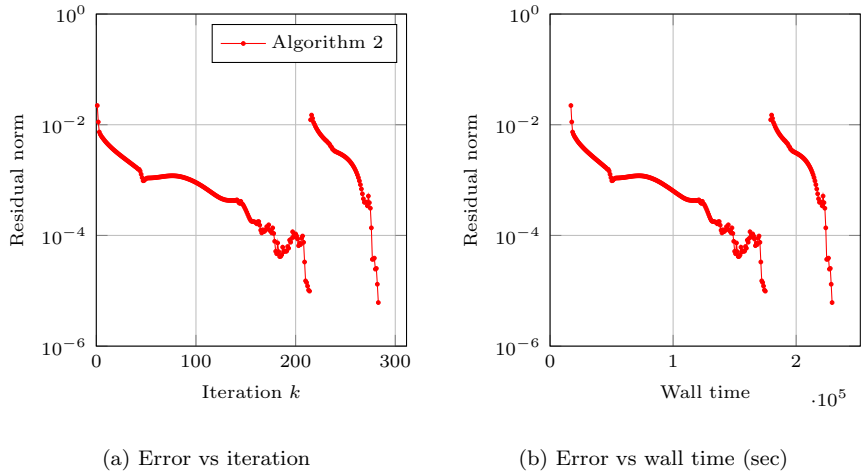
\begin{figure}[h]
  \begin{center}
    \subfigure[Error vs iteration]{\setlength\figureheight{6cm} 
\setlength\figurewidth{6cm}
\begin{tikzpicture}
\begin{axis}[%
width=0.951\figurewidth,
height=\figureheight,
xlabel={Iteration $k$},
ymode=log,
ymin=1e-06,
ytick={1e6,1e4,1e2,1,1e-2,1e-4,1e-6,1e-8,1e-10,1e-12,1e-14,1e-16,1e-18,1e-22,1e-26,1e-30,1e-34},
ymax=1,
xmin=0,
ylabel={Residual norm},
grid,
legend pos=north east,
unbounded coords=jump
]
 \addlegendentry{Algorithm~2};
  \addplot[    mark=*, 
    mark size=0.7pt,
    mark options={fill=red},
    red
    ] table [x index=0, y index=1, col sep=comma]
   {big_plot_data.csv};
\end{axis}
\end{tikzpicture}}
    \subfigure[Error vs wall time (sec)]{\setlength\figureheight{6cm} 
\setlength\figurewidth{6cm}
\begin{tikzpicture}
\begin{axis}[%
width=0.951\figurewidth,
height=\figureheight,
xlabel={Wall time},
ymode=log,
ymin=1e-06,
ytick={1e6,1e4,1e2,1,1e-2,1e-4,1e-6,1e-8,1e-10,1e-12,1e-14,1e-16,1e-18,1e-22,1e-26,1e-30,1e-34},
ymax=1,
xmin=0,
ylabel={Residual norm},
grid,
legend pos=north east,
unbounded coords=jump
]
  \addplot[    mark=*, 
    mark size=0.7pt,
    mark options={fill=red},
    red
    ] table [x index=2, y index=1, col sep=comma]
   {big_plot_data.csv};
\end{axis}
\end{tikzpicture}}    
    \caption{PDE-coupling milling simulation
      \label{fig:big}
    }
  \end{center}
\end{figure}

\section{Conclusions and outlook}

Broyden's method, a standard approach for nonlinear systems of equations,
has here been developed to and turned into a useful algorithnm
for certain types of NEPs. Broyden's method has been developed and
studied considerable in the literature. Several techniques seem to carry over directly, such as limited memory versions \cite{Deuflard:2004:NEWTON,Rotten:2015:BROYDEN}, or variations, such as the (so-called) bad Broyden's method \cite{Broyden:1965:BROYDEN} can be specialized completely analogous to our approach.
In order to maintain generality we have intentionally not pursued a detailed
study of the ODE-solver used in the  specific application in Section~\ref{sec:milling}. A more specialized result using the structure of the  matrices
would probably lead to even further efficiency, but would be
beyond the scope of this paper about methods for NEPs rather than the specific problem Section~\ref{sec:milling}. 


%
%
%
%
\section*{Acknowledgment}
The author is grateful for the valuable discussions
about Broyden's method with David Ek  and Anders Forsgren
of the mathematics department, KTH Royal institute of technology.

\bibliographystyle{plain}
\bibliography{eliasbib,misc}

%
%
\end{document}